\newtheorem{theorem}{Theorem}
\newtheorem{corollary}[theorem]{Corollary}
\newtheorem{proposition}[theorem]{Proposition}
\newtheorem{definition}[theorem]{Definition}
\newcommand{\ie}{\emph{i.e.}}
\newcommand{\emphb}[1]{\emph{\bf #1}}
\newcommand{\pars}[1]{\ensuremath{\left(#1\right)}}
\newcommand{\brks}[1]{\ensuremath{\left[#1\right]}}
\newcommand{\brcs}[1]{\ensuremath{\left\{#1\right\}}}
\newcommand{\brcscond}[2]{\ensuremath{\left\{#1\vphantom{#2}\ \right|\left.\vphantom{#1}#2\right\}}}
\newcommand{\integer}{\ensuremath{\mathbb{Z}}}
\renewcommand{\natural}{\ensuremath{\mathbb{N}}}
\newcommand{\real}{\ensuremath{\mathbb{R}}}
\renewcommand{\min}{\ensuremath{\operatornamewithlimits{min}}}
\newcommand{\Image}{\ensuremath{\operatorname{image}}}
\newcommand{\Rank}{\ensuremath{\operatorname{rank}}}
\newcommand{\supp}{\ensuremath{\operatorname{supp}}}
\newcommand{\Mod}{\ensuremath{\operatorname{\bf Mod}}}
\newcommand{\GrdMod}{\ensuremath{\operatorname{\bf GrdMod}}}
\newcommand{\PersMod}{\ensuremath{\operatorname{\bf PersMod}}}
\newcommand{\Region}{\ensuremath{\operatorname{Region}}}
\newcommand{\Ann}{\ensuremath{\operatorname{Ann}}}
\newcommand{\mcA}{\ensuremath{\mathcal A}}
\newcommand{\mcB}{\ensuremath{\mathcal B}}
\newcommand{\mcC}{\ensuremath{\mathcal C}}
\newcommand{\mcD}{\ensuremath{\mathcal D}}
\newcommand{\mcF}{\ensuremath{\mathcal F}}
\newcommand{\mcH}{\ensuremath{\mathcal H}}
\newcommand{\mcI}{\ensuremath{\mathcal I}}
\newcommand{\mcO}{\ensuremath{\mathcal O}}
\begin{document}

\title{Exterior Critical Series of Persistence Modules}
\date{}
\author{Pawin Vongmasa \and Gunnar Carlsson}
\maketitle

\begin{abstract}{\bf
The persistence barcode
	is a well-established complete discrete invariant for
	finitely generated persistence modules
	\cite{ComputingPersistentHomology} \cite{ZigzagPersistence}.
Its definition, however, does not extend
	to multi-dimensional persistence modules.
In this paper, we introduce a new discrete invariant:
	the \emph{exterior critical series}.
This invariant is complete in the one-dimensional case
	and can be defined for multi-dimensional persistence modules,
	like the \emph{rank invariant}
	\cite{TheoryOfMultidimensionalPersistence}.
However,
	the exterior critical series can detect some features that are
	not captured by the rank invariant.
}\end{abstract}

\section{Introduction}
The structure theorem for finitely generated modules
	over a graded principal ideal domain
	is a well-known result that applies almost directly to
	finitely generated persistence modules with integer grading
	\cite{ComputingPersistentHomology}.
This defines the \emphb{persistence barcode} as an invariant
	of the corresponding persistence module.
The persistence barcode is simply a collection of \emphb{bars}
	(synonymously, \emphb{intervals}),
	each of which can be represented by two numbers,
	its \emphb{birth time} and \emphb{death time}.

As the concept of persistence modules is extended to multiple dimensions,
	the structure theorem does not extend.
The notion of bars becomes undefined.
In \cite{TheoryOfMultidimensionalPersistence},
	Carlsson and Zomorodian
	studied the structure of multi-dimensional persistence modules,
	showed that no complete discrete invariant
\footnote{
	A \emphb{discrete invariant}, as discussed in 
		\cite{TheoryOfMultidimensionalPersistence},
	is an invariant that does not depend on the ground field.
	The ground field will be denoted by $R$ throughout this paper.
}
	can exist,
	and introduced three discrete invariants
	that can be defined for multi-dimensional persistence modules:
	$\xi_0$, $\xi_1$ and the rank invariant.
$\xi_0$ and $\xi_1$ are simply collections (multisets) of
	birth times and death times.
They are less informative than the barcode
	in the one-dimensional case because
	pairing information of birth times and death times
	is not available.
The rank invariant, on the other hand,
	is complete in the one-dimensional case.

In this paper, we define a new discrete invariant
	called the \emphb{exterior critical series},
	a special case of a more general framework also presented here.
Its definition is based on the \emphb{critical series},
	which is (for 1-dimensional persistence modules)
	essentially the same as $\xi_0$ and $\xi_1$,
	the multiset of birth times and the multiset of death times,
	respectively.
(See Proposition \ref{proposition:finitelypresentedmatching}.)
The main idea is that although pairing information is
	not maintained by $\xi_0$ and $\xi_1$,
	it is in some ways encoded in $\xi_0$ and $\xi_1$ of
	higher exterior powers of the module.
More specifically, suppose $M$ is a persistence module.
The exterior critical series of $M$ is the collection
	of critical series of $M, \Lambda^2 M, \Lambda^3 M$, and so on,
	where $\Lambda^p M$ is the $p$-th exterior power of $M$.
We show in Theorem \ref{theorem:main} that
	the exterior critical series is complete, \ie,
	as informative as the barcode,
	for $1$-dimensional persistence modules.
One conclusion that can be drawn is that
	the collection of $\xi_0(\Lambda^p M)$ and $\xi_1(\Lambda^p M)$
	does carry information about pairing of birth and death times in $M$.
It is therefore reasonable to use this collection as an invariant
	even when $M$ is a multi-dimensional persistence module.
In this paper, we give a natural definition of the exterior critical series
	that easily extends to multi-dimensional persistence modules.
The extension involves new invariants $\xi_2, \xi_3, \ldots$,
	which are simply generalization of $\xi_0$ and $\xi_1$.
All $\xi_i$ can be defined via a free resolution of the module and
	an additive functor.
(See Section \ref{section:invariantswithrespecttofunctors}.)

The rank invariant, as defined in 
	\cite{TheoryOfMultidimensionalPersistence},
	is also a discrete invariant that is complete in the $1$-dimensional case
	and extends to the multi-dimensional case,
	just like the exterior critical series.
However, there are examples of modules that can be distinguished
	by their exterior critical series but not by their rank invariants.
We give one example in Fig. \ref{figure:invariantexample}.

The organization of this paper is as follows.
We start with basic definitions of
	\emph{graded modules} and
	their tensor products.
\emph{Persistence modules} are then defined as
	graded modules with more information added.
A procedure to derive an invariant from
	an additive functor is discussed
	in Section \ref{section:invariantswithrespecttofunctors},
	and the \emph{critical series} is simply the result of the said procedure.
Section \ref{section:structuretheorems} follows
	with a list of proven structure theorems for
	(certain classes of) 1-dimensional
	persistence modules, phrased in our terminology.
We discuss \emph{causality}
	and state the specialized classification result
	for ``nice'' \emph{causal persistence modules} in
	Corollary \ref{corollary:causalbarcodedecomposition}.
Section \ref{section:main} starts with
	a list of definitions specific to the class of
	persistence modules of interest
	($1$-dimensional, finitely-presented, and bounded).
Another definition of the critical series is given,
	and Proposition \ref{proposition:finitelypresentedmatching}
	shows that in this specific setting,
	it coincides with the more general definition given earlier
	in Section \ref{section:invariantswithrespecttofunctors}.
Finally, Theorem \ref{theorem:main}, the main result,
	is stated and proved in Section \ref{section:mainresult}.

Definitions of terms in this paper are made quite general
	for the purpose of future extension.
In the statement of the main result,
	we choose to work with real-valued grading instead of the traditional
	integer-valued grading.
We choose so because in most applications,
	the persistence module is constructed from
	a filtration with a real-valued parameter,
	and in most works, if not all,
	the notion of \emph{stability} of the invariant
	relies on the metric on the parameter space
	rather than the integer grades.
Also, if one desires to go back to integer grades,
	one can simply consider the integers as a subset of the real numbers.

\section{Notations, Conventions and Background}
We will assume $R$ is a commutative ring with unity throughout the paper.

\subsection{Graded $R$-Modules}
Suppose $G$ is a set.
A \emphb{$G$-graded $R$-module} is an $R$-module $M$
	together with a collection $\brcs{M_g}_{g\in G}$ of $R$-submodules of $M$
	indexed by $G$ such that $M = \bigoplus_{g \in G} M_g$.
This indexed collection is called \emphb{grading of $M$},
	and $G$ is the set of \emphb{grades}.

The indexed collection is in fact a function from $G$ to the collection of 
	submodules, so
	a $G$-graded $R$-module is determined by
	the couple $(M, \Gamma)$ where $\Gamma:g \mapsto M_g$
	satisfies $M = \bigoplus_{g \in G} \Gamma(g)$.
However, we will usually avoid the reference to $\Gamma$
	and assume $\Gamma$ is given when we say that
	``$M$ is a $G$-graded $R$-module''
	(instead of ``$(M, \Gamma)$ is a $G$-graded $R$-module'').
The notation $M_g$ will also be assumed present,
	and it is equal to $\Gamma(g)$.
(We need $\Gamma$ because it matters
	which $g \in G$ goes to which submodule of $M$.)

Every $G$-graded $R$-module $M$ comes equipped with natural projections
	$\pi_g:M \to M_g$ and inclusions $\iota_g:M_g \to M$.
The \emphb{support of $M$} is
	$\supp(M) = \brcscond{g \in G}{M_g \ne 0}$.
The \emphb{support of $m \in M$} is
	$\supp(m) = \brcscond{g \in G}{\pi_g(m) \ne 0}$.
We say that $M$ or $m \in M$ is \emphb{bounded} if its support
	is contained in some interval $[g, g'] \subseteq G$
\footnote{
	An interval $[g, g']$ is defined by
	$[g, g'] = \brcscond{h}{g \preceq h \preceq g'} \subseteq G$.
	Intervals of forms $[g, g')$, $(g, g']$ and $(g, g')$ are defined similarly.
}.
An element $m \in M$ is \emphb{homogeneous} if
	$\supp(m)$ has cardinality $0$ or $1$, \ie,
	there exists $g \in G$ such that $m \in M_g$.
If $m \in M$ is non-zero and homogeneous,
	then there exists a \emph{unique} $g \in G$ with $m \in M_g$.
$g$ is called the \emphb{degree of $m$},
	written $\deg(m)$.

An $R$-module homomorphism $\varphi:M \to M'$
	between $G$-graded modules
	is \emphb{graded} if $\varphi(M_g) \subseteq M'_g$
	for all $g \in G$.
We denote by $\GrdMod(R, G)$
	the category whose objects are $G$-graded $R$-modules
	and morphisms are graded $R$-module homomorphisms.
Submodules are defined by monomorphisms
	and quotient modules are defined by epimorphisms.
It follows that every
	$G$-graded $R$-submodule is generated by homogeneous elements.

The \emphb{tensor product} between graded modules are defined as follows.
Suppose $H, H'$ and $G$ are sets, $\cdot: H \times H' \to G$
	is a binary function,
	$M \in \GrdMod(R, H)$ and $M' \in \GrdMod(R, H')$.
The usual tensor product $M \otimes_R M'$ can be given
	the structure of a $G$-graded $R$-module by defining the grading
\[
	\pars{M \otimes_R M'}_g =
	\bigoplus_{\substack{(h, h') \in H \times H'\\ h \cdot h' = g}}
	M_h \otimes_R M'_{h'}.
\]
In other words, the binary operation
	$\cdot: H \times H' \to G$ induces the bifunctor
\[
	\otimes_R: \GrdMod(R, H) \times \GrdMod(R, H') \to
	\GrdMod(R, G).
\]
More generally,
	suppose we have a collection
	of modules $M_i \in \GrdMod(R, H_i)$, $i \in \mcI$.
A function $o: \prod_{i\in\mcI} H_i \to G$
	makes the tensor product $\bigotimes_{i\in\mcI}M_i$
	into a $G$-graded $R$-module.
The induced tensor product is a multifunctor
	from $\prod_{i\in\mcI} \GrdMod(R, H_i)$ to $\GrdMod(R, G)$.

\subsection{Persistence Modules}
Suppose $G$ is a partially ordered set and
	$X$ is a commutative monoid acting on $G$.
We say that $(X, G)$ is a \emphb{persistence grading} if
\begin{enumerate}
	\item \label{actionpositivity}
		For $g \in G$ and $x \in X$, $g \preceq xg$.
	\item \label{actiontransitivity}
		For $g, g' \in G$ with $g \preceq g'$,
			there exists a unique $x \in X$ such that $xg = g'$.
\end{enumerate}
These conditions imply that $X$ is a cancellative monoid
	that can be embedded as a subset of $G$.
Any choice of embedding induces the same order on $X$,
	so we assume $X$ is ordered by $\preceq$ also.
By condition \eqref{actiontransitivity}
	on the action of $X$ on $G$,
	we define $g/g' =\frac g{g'} = x$,
	where $g, g' \in G$, $g \succeq g'$
	and $g=xg'$.

A $G$-graded $R$-module $M$ can be made into a \emphb{persistence module}
	by adding the following:
\begin{enumerate}
\item \label{persistencegrading}
  A persistence grading $(X, G)$.
	$X$ is called the \emphb{monomial monoid}
		and its elements are called \emphb{monomials}.
\item \label{compatibleaction}
	An $R[X]$-module structure on $M$ where
		such that
		$xM_g \subseteq M_{xg}$ for all $x \in X$ and $g \in G$.
	$P = R[X]$ is called the \emphb{ring of polynomials}, and
		its elements are called \emphb{polynomials}.
\end{enumerate}
For each $x \in X \subseteq P$, the action of $x$ on $M$
	induces an $R$-module endomorphism $M^x: M \to M$.
When restricted to $M_g$, $g \in G$, the image of $M^x$
	is contained in $M_{xg}$ (by \eqref{compatibleaction}),
	so we define $M^x_g: M_g \to M_{xg}$
	by $M^x_g(m) = M^x(m) = xm$.
$M^x_g$ is obviously an $R$-module homomorphism.
We call $M^x_g$ the \emphb{monomial homomorphism of $x$ on $M_g$}.

The set of \emphb{$X$-annihilators of $m \in M$}
	is $\Ann_X(m) = \brcscond{x \in X}{xm = 0}$.
This is a subsemigroup of $X$.
The \emphb{lifespan of $m \in M$}, written $L(m)$, is the complement of
	$\Ann_X(m)$, \ie, $L(m) = X - \Ann_X(m)$.
Note that the set of annihilators of $m$,
	defined conventionally by $\Ann(m) = \brcscond{p \in P}{pm = 0}$
	may contain more information than $\Ann_X(m)$ only when $R$ is not a field.
When $R$ is a field, $\Ann(m)$ is simply the vector space with basis
	$\Ann_X(m)$.

With $R, X$ and $G$ fixed,
	we define the category $\PersMod(R,X,G)$ whose
	objects are persistence modules with respect to the given $R, X$ and $G$,
	and whose morphisms are \emph{$G$-graded} $P$-module homomorphisms.
Persistence submodules and quotient modules are defined by
	monomorphisms and epimorphisms respectively.
We simply say that ``$M$ is a persistence module''
	when it is clear what $R$, $X$ and $G$ are,
	and morphisms of $\PersMod(R,X,G)$ will be called
	``persistence module homomorphisms''.

A persistence module can easily be made into a graded $R$-module
	by forgetting the action of $X$ on the module.
Conversely, a graded $R$-module can be made into a persistence module
	by adding the trivial action of $X$ on the module.
The former operation is a forgetful functor from $\PersMod(R, X, G)$
	to $\GrdMod(R, G)$;
	the latter operation is an embedding of $\GrdMod(R, G)$
	in $\PersMod(R, X, G)$.

\subsubsection*{Tensor Products of Persistence Modules}
Suppose we have
	$(X, G)$ a persistence grading,
	$M \in \PersMod(R, X, H)$,
	$M' \in \PersMod(R, X, H')$,
	and $\cdot: H \times H' \to G$ a binary function such that
	$(xh) \cdot h' = h \cdot (xh') = x(h \cdot h')$ for all $x \in X$,
	$h \in H$ and $h' \in H'$.
The tensor product $M \otimes_P M'$ can be given the structure
	of a persistence module in $\PersMod(R, X, G)$ as follows.

Let
	$\iota: M \otimes_R M' \to M \otimes_P M'$ be
	a surjective $R$-module homomorphism defined by
	$\iota\pars{m \otimes_R m'} = m \otimes_P m'$.
The kernel of $\iota$ is generated by
	elements of the form $xm \otimes m' - m\otimes xm'$
	for $m \in M$, $m' \in M'$ and $x \in X$.
Passing from $M \otimes_R M'$ to $M \otimes_P M'$ via $\iota$
	is simply extension of scalars from $R$ to $P$.
We impose the following $G$-grading on $M \otimes_P M'$:
\begin{equation}
	\pars{M \otimes_P M'}_g 
	= \iota\pars{\pars{M \otimes_R M'}_g}
	= \bigoplus_{\substack{(h, h') \in H\times H'\\h\cdot h' = g}}
	\iota\pars{M_h \otimes_R M'_{h'}}.
	\label{eq:persistencetensorgrading}
\end{equation}
To check that the action of $P$ on $M \otimes_P M'$ indeed satisfies
	the condition
	$x\pars{M \otimes_P M'}_g \subseteq \pars{M \otimes_P M'}_{xg}$
	for all $x \in X$ and $g \in G$,
	suppose $h \in H$, $h' \in H'$, $m \in M_h$, $m' \in M'_{h'}$ and $x \in X$
	are given.
Then
\[
	x\iota\pars{m \otimes_R m'}
	= x\pars{m \otimes_P m'}
	= xm \otimes_P m' = \iota\pars{(xm) \otimes_R m'}
	\in \iota\pars{M_{xh} \otimes_R M'_{h'}}
	\subseteq \pars{M \otimes_P M'}_{xh\cdot h'}.
\]
Hence, \eqref{eq:persistencetensorgrading}
	defines $M \otimes_P M'$ as a persistence module.
We conclude that the binary function $\cdot$ induces the bifunctor
\[
	\otimes_P: \PersMod(R, X, H) \times \PersMod(R, X, H') \to
	\PersMod(R, X, G).
\]

More generally, suppose there is a collection of modules
	$M_i \in \PersMod(R, X, H_i)$, $i \in \mcI$,
	and a function $o: \prod_{i\in\mcI} H_i \to G$ such that
	$x o(\gamma) = o(\gamma_i)$ for all $\gamma \in \prod_{j\in\mcI} H_j$
	and $i \in \mcI$,
	where $\gamma_i$ are defined by
\[
	\gamma_i(j) = \begin{cases}
		x \gamma(j) & ; i = j \\
		\gamma(j) & ; i \ne j
	\end{cases}
\]
Then the function $o$ induces a tensor product multifunctor
	from $\prod_{i\in\mcI} \PersMod(R, X, H_i)$ to
	$\PersMod(R, X, G)$.
If all $H_i$ are monoids,
	we only need $o: \bigoplus_{j\in\mcI} H_j \to G$.

Suppose $m_i \in M_i$ for $i \in \mcI$.
Then $\Ann_X\pars{\bigotimes_{i\in\mcI} m_i} =
		\bigcup_{i\in\mcI} \Ann_X\pars{m_i}$.
In terms of lifespans, we have
	$L\pars{\bigotimes_{i\in\mcI} m_i} = \bigcap_{i\in\mcI} L(m_i)$.
	
\subsubsection*{Tensor Products in $\PersMod(R, X, G)$}
	\label{section:tensorpowers}
In order for the tensor product to be defined
	within one category $\PersMod(R, X, G)$,
	we need the binary function $\cdot:G \times G \to G$
	such that $x(g \cdot g') = (xg) \cdot g' = g \cdot (xg')$
	for $g, g' \in G$ and $x \in X$.
Instead of requiring separate functions for
	tensor products with different numbers of factors,
	it is more convenient to impose another condition: that $\cdot$ is associative.
Now $G$ and $\cdot$ form a semigroup.
We will omit writing $\cdot$ in an expression
	involving elements of $X$ and $G$.
(This semigroup structure does not define
	a tensor product with infinitely many factors,
	but we will not need such generality.)

\subsubsection*{Tensor Powers, Symmetric Powers and Exterior Powers}
Consider the $n$-th symmetric power of a persistence module $M$,
	written $S^n(M)$.
$S^n(M) = M^{\otimes n} / K$, where $K$ is the module generated by
	symmetry.
$K$ is a persistence module because
	it can be generated by homogeneous elements, so
	$S^n(M)$ is a persistence module.

Similarly, the $n$-th exterior power of $M$ is
	$\Lambda^n(M) = M^{\otimes n} / K$, where $K$ is the module
	generated by antisymmetry.
$K$ is, again, a persistence module, so
	$\Lambda^n(M)$ is also a persistence module.

In $M^{\otimes n}$, $S^n(M)$ and $\Lambda^n(M)$, we have the
	same relation regarding $\Ann_X$ and $L$ of simple tensors
	and their components: for $m_i \in M$, if
	$m_i \ne 0$, then
\[
	\begin{array}{c}
		\displaystyle{
		\Ann_X\pars{\bigotimes_{i=1}^n m_i} =
		\Ann_X\pars{\bigodot_{i=1}^n m_i} =
		\bigcup_{i=1}^n \Ann_X(m_i)
		} \\
		\displaystyle{
		L\pars{\bigotimes_{i=1}^n m_i} =
		L\pars{\bigodot_{i=1}^n m_i} =
		\bigcap_{i=1}^n L(m_i),
		}
	\end{array}
\]
where $\bigodot$ stands for the symmetric product.
Moreover, if $m_1 \wedge \ldots \wedge m_n \ne 0$, we also have
\[
	\begin{array}{c}
		\displaystyle{
		\Ann_X\pars{\bigwedge_{i=1}^n m_i} =
		\bigcup_{i=1}^n \Ann_X(m_i)
		} \\
		\displaystyle{
		L\pars{\bigwedge_{i=1}^n m_i} =
		\bigcap_{i=1}^n L(m_i),
		}
	\end{array}
\]
where $\bigwedge$ stands for the exterior product.

\subsection{Invariants With Respect to Functors}
\label{section:invariantswithrespecttofunctors}

Suppose $\mcF$ is an additive covariant functor
	from $\PersMod(R, X, G)$ to $R$-$\Mod$,
	and suppose $M$ is a persistence module with a free resolution
\[
	\ldots \to F_2 \to F_1 \to F_0 \to M \to 0
\]
	with $F_i$ free persistence modules.
The \emphb{$\mcF$-homology sequence of $M$}
	is the homology sequence of the chain complex
\[
	\ldots \to \mcF(F_2) \to \mcF(F_1) \to \mcF(F_0) \to 0.
\]
We will denote by $H\mcF_i(M)$ the
	$i$-th homology module,
	and write $H\mcF_*(M)$ to refer to the whole sequence
	$(H\mcF_0(M), H\mcF_1(M), \ldots)$.
The sequence $H\mcF_*(M)$ is independent
	of the choice of resolution because $\mcF$ is additive.

If each $H\mcF_i(M)$ has finite rank,
	we say that $M$ \emphb{admits an $\mcF$-Hilbert-Poincar\'e series},
	and we define
	the \emphb{$\mcF$-Hilbert-Poincar\'e series of $M$} by
\begin{equation}
	\mcH\mcF(M) = \sum_{i=0}^\infty \Rank\pars{H\mcF_i(M)}t^i
	\label{eq:hilbertpoincareseries}
\end{equation}
where $t$ is an indeterminate.
$\mcH\mcF(M)$ is a member of $\integer[[t]]$,
	the ring of formal power series with integer coefficients.

In addition to all $H\mcF_i(M)$ having finite rank,
	if only finitely many $H\mcF_i(M)$ are non-zero modules,
	we say that $M$ \emphb{admits an $\mcF$-Euler characteristic},
	and we define
	the \emphb{$\mcF$-Euler characteristic of $M$} by
	substituting $t = -1$ in the $\mcF$-Hilbert-Poincar\'e series:
\begin{equation}
	\chi\mcF(M) = \sum_{i=0}^\infty \Rank\pars{H\mcF_i(M)}(-1)^i
	= \mcH\mcF(M)|_{t = -1}.
	\label{eq:eulercharacteristic}
\end{equation}

By construction, $H\mcF_*$ is an invariant
	for persistence modules.
$\mcH\mcF$ is an invariant for the subclass of
	persistence modules that admit $\mcF$-Hilbert-Poincar\'e series.
$\chi\mcF$ is an invariant for the subclass of
	persistence modules that admit $\mcF$-Euler characteristics.

\subsection*{Causal Onset Functor and Critical Series}
Suppose $M$ is a persistence module.
For a fixed $g \in G$, define the \emphb{lower sum of $M$ at $g$} by
\[
	M_{\prec g} = \sum_{h \prec g} \Image\pars{M_h^{g/h}} 
	= \sum_{h \prec g} (g/h)M_h \subseteq M_g.
\]
(Recall that $g/h$ is defined as the unique $x \in X$ such that $xh = g$.)
The \emphb{causal onset functor} $\mcO_g$ is defined by
\[
	\mcO_g(M) = M_g/M_{\prec g}.
\]
$\mcO_g$ maps an object in $\PersMod(R, X, G)$ to an object in $R$-$\Mod$.
Naturally, morphisms should be mapped by
\begin{equation}
	\pars{\mcO_g(\varphi)}(m + M_{\prec g}) = \varphi(m) + M'_{\prec g}
	; \qquad m \in M_h, h \prec g
	\label{eq:lowerendfunctoronmorphisms}
\end{equation}
where $\varphi:M \to M'$ is a persistence module homomorphism.
\begin{proposition}
	$\mcO_g$ is an additive functor.
\end{proposition}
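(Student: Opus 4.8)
The plan is to verify the two things that "additive functor" requires: (1) $\mcO_g$ is a functor from $\PersMod(R,X,G)$ to $R\text{-}\Mod$, i.e. it respects identities and composition; and (2) $\mcO_g$ is additive, i.e. $\mcO_g(\varphi+\psi)=\mcO_g(\varphi)+\mcO_g(\psi)$ for parallel morphisms $\varphi,\psi\colon M\to M'$. Before either of these, though, the real content is checking that $\mcO_g$ is even well-defined on morphisms: the formula \eqref{eq:lowerendfunctoronmorphisms} must (a) actually land in $M'_g/M'_{\prec g}$, and (b) be independent of the coset representative.

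\medskip
\noindent\textbf{Well-definedness on morphisms.}
First I would note that a persistence module homomorphism $\varphi\colon M\to M'$ is in particular a graded $P$-module homomorphism, so $\varphi(M_g)\subseteq M'_g$ and $\varphi((g/h)m)=(g/h)\varphi(m)$ for $m\in M_h$. Hence for $h\prec g$, $\varphi\bigl(\Image(M_h^{g/h})\bigr)=\varphi\bigl((g/h)M_h\bigr)=(g/h)\varphi(M_h)\subseteq (g/h)M'_h=\Image\bigl((M')_h^{g/h}\bigr)$. Summing over all $h\prec g$ gives $\varphi(M_{\prec g})\subseteq M'_{\prec g}$. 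Therefore the composite $M_g\xrightarrow{\varphi} M'_g\twoheadrightarrow M'_g/M'_{\prec g}$ kills $M_{\prec g}$ and so factors uniquely through $M_g/M_{\prec g}$; this factorization is exactly $\mcO_g(\varphi)$, and this is the cleanest way to present it — as the unique induced map on quotients — rather than chasing representatives by hand. (The notation in \eqref{eq:lowerendfunctoronmorphisms} writes $m+M_{\prec g}$ with $m\in M_h,\,h\prec g$, but of course the intended domain element ranges over all of $M_g/M_{\prec g}$; I'd state the factorization for a general $m\in M_g$.) I expect this to be the only step requiring any thought, and it is mild.

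\medskip
\noindent\textbf{Functoriality.}
$\mcO_g(\mathrm{id}_M)$ is the map induced by $\mathrm{id}_{M_g}$ on $M_g/M_{\prec g}$, which is $\mathrm{id}_{\mcO_g(M)}$. For composition, given $\varphi\colon M\to M'$ and $\psi\colon M'\to M''$, both $\mcO_g(\psi)\circ\mcO_g(\varphi)$ and $\mcO_g(\psi\circ\varphi)$ are maps $\mcO_g(M)\to\mcO_g(M'')$ that make the evident square with the quotient projections commute; by the uniqueness in the universal property of the quotient $M_g\twoheadrightarrow M_g/M_{\prec g}$, they coincide. Equivalently, one checks directly on a representative $m\in M_g$ that both send $m+M_{\prec g}$ to $\psi(\varphi(m))+M''_{\prec g}$.

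\medskip
\noindent\textbf{Additivity.}
For $\varphi,\psi\colon M\to M'$, their sum $\varphi+\psi$ is again a persistence module homomorphism, and on a representative $m\in M_g$ we compute $\mcO_g(\varphi+\psi)(m+M_{\prec g})=(\varphi+\psi)(m)+M'_{\prec g}=(\varphi(m)+M'_{\prec g})+(\psi(m)+M'_{\prec g})=\bigl(\mcO_g(\varphi)+\mcO_g(\psi)\bigr)(m+M_{\prec g})$, using that addition in the quotient $R$-module is computed representative-wise. (If one also wants $\mcO_g$ to preserve the zero object and finite biproducts — a consequence of additivity for functors between abelian/additive categories — it follows formally, but the $\mathrm{Hom}$-additivity above is what "additive functor" means here.) The main obstacle, such as it is, is purely the bookkeeping in the well-definedness step, i.e. the inclusion $\varphi(M_{\prec g})\subseteq M'_{\prec g}$; everything after that is formal nonsense about induced maps on quotients.
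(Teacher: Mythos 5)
Your proof is correct and rests on exactly the same crux as the paper's: the only nontrivial point is well-definedness of $\mcO_g$ on morphisms, which both you and the paper reduce to the inclusion $\varphi(M_{\prec g})\subseteq M'_{\prec g}$, obtained from $\varphi(xM_h)=x\varphi(M_h)\subseteq xM'_h$. You spell out functoriality and $\Hom$-additivity more explicitly than the paper (which dismisses them as immediate), and you correctly flag that the side condition ``$m\in M_h,\ h\prec g$'' in \eqref{eq:lowerendfunctoronmorphisms} should read $m\in M_g$, but these are elaborations, not a different route.
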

\begin{proof}
	This follows immediately from \eqref{eq:lowerendfunctoronmorphisms}.
	What may not be so obvious is that
		$\mcO_g$ is indeed well-defined by
		\eqref{eq:lowerendfunctoronmorphisms}.
	Suppose $\varphi:M \to M'$ is a persistence module homomorphism,
		and $m, n \in M_g$ with $m - n \in M_{\prec g}$.
	Then $\pars{\mcO_g(\varphi)}(m + M_{\prec g}) -
	\pars{\mcO_g(\varphi)}(n + M_{\prec g}) = \varphi(m - n) + M'_{\prec g}$.
	We need $\varphi(m - n) \subseteq M'_{\prec g}$, \ie,
	$\varphi(M_{\prec g}) \subseteq M'_{\prec g}$, but this follows from
	$\varphi(xM_h) = x\varphi(M_h) \subseteq xM'_h$ for any $h \in G$.
	Therefore, $\mcO_g(\varphi)$ is well-defined.
\end{proof}
\begin{definition}
A persistence module $M$ \emphb{admits a causal series} if
	it admits the $\mcO_g$-Hilbert-Poincar\'e series for each $g \in G$.
The \emphb{causal series of $M$} is the function
	$\mcH_{\mcO}(M): G \to \integer[[t]]$ defined by
\begin{equation}
	\mcH_{\mcO}(M)(g) = \mcH\mcO_g(M).
	\label{eq:causalseries}
\end{equation}
$M$ \emphb{admits a critical series} if
	it admits the $\mcO_g$-Euler characteristic for each $g \in G$.
The \emphb{critical series of $M$} is the function
	$\chi_\mcO(M): G \to \integer$ defined by
\begin{equation}
	\chi_{\mcO}(M)(g) = \chi\mcO_g(M).
	\label{eq:criticalseries}
\end{equation}
\end{definition}
The invariant of interest in this paper
	is the \emphb{exterior critical series}, which is
	the collection of critical series of the module and
	its exterior powers.
\begin{definition}
The \emphb{exterior critical series of $M$} is the function
	$\chi_{\mcO}^{\wedge}(M): G \to \integer[[z]]$ defined by
\begin{equation}
	\chi_{\mcO}^{\wedge}(M)(g) = \sum_{i=0}^\infty \chi\mcO_g(\Lambda^i M)z^i.
	\label{eq:exteriorcriticalseries}
\end{equation}
\end{definition}
We note here that
	it is possible to use tensor or symmetric powers
	instead of (or together with) exterior powers to define different 
	invariants.
We choose to study exterior powers because
	the number of non-trivial powers is
	equal to the number of generators of the module,
	in contrast with
	tensor and symmetric powers, which will be non-zero
	for all positive powers (except when the module is zero).

\subsection*{Conventions for Formal Series}
We denote by $\integer[[G]]$ the collection of
	all functions from $G$ to $\integer$.
The subcollection consisting of functions with finite support
	can be identified with $\integer[G]$,
	the free abelian group generated by $G$.
We embed $G$ as a subset of $\integer[G]$ via the
	\emphb{indeterminate embedding}
	$x:G \to \integer[G]$ defined by
\[
	x(g)(h) =
	\begin{cases}
		1 & ; h = g \\
		0 & ; h \ne g
	\end{cases}
\]
We adopt the more common notation of writing $x^g$ instead of $x(g)$.
The symbol $x$ will be called an \emphb{indeterminate}.
This allows expressing an element of $\integer[G]$ as a finite sum
	$\sum_{i=1}^n n_i x^{g_i}$ where $n_i \in \integer$ and $g_i \in G$.
We extend this convention to allow a formal infinite series
	to represent an element of $\integer[[G]]$.
(This construction remains valid with any unital ring replacing $\integer$.)
With this convention, \eqref{eq:causalseries},
	\eqref{eq:criticalseries} and \eqref{eq:exteriorcriticalseries}
	can be written in full as
\begin{align*}
	\mcH_\mcO(M) & = \sum_{g \in G} \sum_{i=0}^\infty
	\Rank\pars{(H\mcO_g)_i(M)}t^ix^g \\
	\chi_{\mcO}(M) & = \sum_{g \in G} \sum_{i=0}^\infty
	(-1)^i\Rank\pars{(H\mcO_g)_i(M)}x^g \\
	\chi_{\mcO}^\wedge(M) & =
	\sum_{i=0}^\infty \sum_{g \in G} \sum_{j=0}^\infty
	(-1)^j\Rank\pars{(H\mcO_g)_j(\Lambda^iM)}x^g z^i.
\end{align*}

\subsection{Structure Theorems}
\label{section:structuretheorems}
A subset $S \subseteq G$ is \emphb{convex} if
	for all $g, g' \in S$ with $g \preceq g'$,
	$[g, g'] \subseteq S$.
We also define $(-\infty, g']$, $(\infty, g')$, $[g, \infty)$ and $(g, \infty)$
	in an obvious way (assuming $-\infty$ and $\infty$ are not in $G$).

A \emphb{region module of $S \subseteq G$}, written $\Region(S)$,
	is a persistence module defined by
\begin{align*}
	\pars{\Region(S)}_g & =
	\begin{cases}
		R & ; g \in S \\
		0 & ; g \notin S
	\end{cases} \\
	\pars{\Region(S)}_g^x & =
	\begin{cases}
		\text{identity} & ; [g, xg] \subseteq S \\
		0 & ; [g, xg] \not\subseteq S
	\end{cases}.
\end{align*}
A region module $\Region(S)$ is \emphb{indecomposable} if
	$S$ is convex.
An indecomposable region module $M$ have the property
	that $M_g^x$ is always an identity when $M_{xg} \ne 0$.

Since region modules from intervals of the form $[g, g']$,
	$[g, g')$, $(g, g')$ and $(g, g']$
	are of special importance ($g$ may be $-\infty$ and $g'$ may be $\infty$),
	we will omit the outer parentheses and simply write
	$\Region[g, g']$, $\Region[g, g')$ and so on.

The first known structure theorem is
	the classification of finitely generated graded modules
	over a graded principal ideal domain
	\cite{ComputingPersistentHomology}.
This theorem applies
	when $R$ is a field and $X = G = \integer_{\ge 0}$,
	where the action of $X$ on $G$ is the usual addition.
\begin{theorem}
	Let $R$ be a field and $X = G = \integer_{\ge 0}$.
	Then, every finitely generated persistence module $M$
		can be written as
	\[
		M \cong \bigoplus_{i=1}^n \Region[\alpha_i, \alpha_i + \ell_i)
	\]
	where $\alpha_i \in G$, $\ell_i > 0$, and $\ell_i$ may be $\infty$.
\end{theorem}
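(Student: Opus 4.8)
**

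The plan is to prove the structure theorem by reducing it to the classical classification of finitely generated graded modules over a graded PID, and then translating the resulting decomposition back into the language of region modules. The key observation is that when $R$ is a field and $X = G = \integer_{\ge 0}$ with addition, the polynomial ring $P = R[X] = R[t]$ is a graded PID, and a persistence module in $\PersMod(R, X, G)$ is precisely a $\integer_{\ge 0}$-graded $R[t]$-module. So the hypothesis of the classical structure theorem is met verbatim.

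First I would recall (citing \cite{ComputingPersistentHomology}) that every finitely generated graded module over the graded PID $R[t]$ decomposes as a finite direct sum of graded cyclic modules, each of the form $\Sigma^\alpha R[t]$ (a shifted copy of the free module, i.e. $R[t]$ with grading shifted up by $\alpha$) or $\Sigma^\alpha\pars{R[t]/(t^\ell)}$ for some $\alpha \in \integer_{\ge 0}$ and $\ell > 0$. Since $R$ is a field, every nonzero homogeneous ideal of $R[t]$ is of the form $(t^\ell)$, so these are indeed all the graded cyclic modules up to shift, and finite generation guarantees finitely many summands. The second step is to identify each summand with a region module: I would check directly from the definitions that $\Sigma^\alpha\pars{R[t]/(t^\ell)} \cong \Region[\alpha, \alpha+\ell)$ as persistence modules, and $\Sigma^\alpha R[t] \cong \Region[\alpha, \infty) = \Region[\alpha, \alpha + \infty)$, interpreting $\ell_i = \infty$ in that case. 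In each case the graded piece at $g$ is $R$ exactly when $\alpha \le g < \alpha + \ell$ (or $g \ge \alpha$) and zero otherwise, and the monomial map $t^x$ acts as the identity wherever both source and target are nonzero, which is exactly the definition of the region module of a half-open interval.

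The main obstacle, such as it is, is bookkeeping rather than a deep difficulty: one must be careful that the isomorphisms in the classical theorem are isomorphisms of \emph{graded} modules and that the shift $\Sigma^\alpha$ is matched correctly to the birth time $\alpha_i$, and one must confirm that the category $\PersMod(R, X, G)$ of the paper coincides, in this special case, with the category of graded $R[t]$-modules with graded homomorphisms (this is immediate from the definitions of persistence modules once $X = \integer_{\ge 0}$ acts by addition and $P = R[X] = R[t]$). I would also note that $\ell_i > 0$ can be assumed because a zero-length interval gives the zero module, which contributes nothing to the direct sum, and that $\alpha_i \in G = \integer_{\ge 0}$ automatically. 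No argument about uniqueness of the decomposition is required since the statement only asserts existence. With the dictionary between shifted cyclic $R[t]$-modules and half-open interval region modules in hand, the theorem follows by transporting the classical decomposition across it.
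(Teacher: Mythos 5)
Your proof is correct and matches the paper's treatment: the paper offers no proof of this theorem at all, simply citing the classical classification of finitely generated graded modules over the graded PID $R[t]$, which is exactly the reduction you carry out. Your explicit dictionary between shifted cyclic $R[t]$-modules and the region modules $\Region[\alpha,\alpha+\ell)$ is the routine translation the paper leaves implicit.
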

Webb \cite{DecompositionOfGradedModules} 
	proved a more general result.
$G$ can be extended to cover negative degrees.
\begin{theorem}
	\label{theorem:decompositionofgradedmodules}
	Let $R$ be a field, $X = \integer_{\ge 0}$ and $G = \integer$.
	Then, $M$ can be written as a direct sum of cyclic module
	\[
		M \cong \bigoplus_{i \in \mcI} \Region[\alpha_i, \alpha_i + \ell_i)
	\]
	where $\alpha_i \in G$, $\ell_i > 0$, and $\ell_i$ may be $\infty$,
	if one of the following is true:
	\begin{enumerate}
	\item
		$M$ is \emphb{pointwise finite-dimensional} ($\dim(M_g) < \infty$ for all $g \in G$)
		and $M$ contains no injective submodule.
	\item
		$M$ is \emphb{bounded below} (there exists $g \in G$ such that
			$g' \prec g$ implies $M_g = 0$).
	\end{enumerate}
\end{theorem}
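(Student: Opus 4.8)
The plan is to restate everything in terms of graded modules. Since $R$ is a field, a persistence module over $(R,\integer_{\ge0},\integer)$ is the same as a $\integer$-graded module over the graded principal ideal domain $P=R[t]$ (with $t$ the monomial of degree $1$), and the region modules $\Region[\alpha,\alpha+\ell)$ with $\ell$ finite or $\ell=\infty$ are precisely the indecomposable cyclic graded $P$-modules --- the degree shifts of $P/(t^\ell)$ and of $P$. So in both cases what must be shown is that $M$ is a direct sum of cyclic graded modules; this is Webb's theorem \cite{DecompositionOfGradedModules}, whose argument I outline.

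Under hypothesis (2) I would first shift degrees so that $M_g=0$ for $g<0$. For each $g$ choose a graded $R$-linear complement $V_g$ of $M_{\prec g}$ in $M_g$ (so $V_g\cong\mcO_g(M)$) together with an $R$-basis of $V_g$. These homogeneous elements generate $M$, by a least-degree argument: if $W$ is the submodule they generate and $g$ is least with $W_g\ne M_g$, then $W_h=M_h$ for $h\prec g$, so $M_g=M_{\prec g}+V_g=\sum_{h\prec g}(g/h)W_h+V_g\subseteq W_g$. For a chosen generator $v$ of degree $\alpha$ one has $Pv\cong P/\Ann(v)$ with $\Ann(v)$ a homogeneous ideal of $P$, hence $(0)$ or $(t^\ell)$, so $Pv\cong\Region[\alpha,\alpha+\ell)$. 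The substantive point is to arrange that the sum of these cyclic submodules is \emph{direct}. I would achieve this by constructing the generators by induction on $g=0,1,2,\dots$ --- the one place where boundedness below is used --- carrying along a basis of $\bigoplus_{h\le g}M_h$ compatible with the cyclic submodules chosen so far: at stage $g$ one applies $M_{g-1}^t$, corrects the basis in degree $g-1$ (reassigning a surviving generator to a later birth degree whenever a combination of survivors is killed, in the style of the column reduction used to compute barcodes \cite{ComputingPersistentHomology}) so that its image is a basis adapted to $\Ker\pars{M_{g-1}^t}$, then extends by a basis of a complement of $M_{\prec g}$ in $M_g$. An equivalent packaging, since $P$ has graded global dimension one: take a graded free resolution $0\to F_1\to F_0\to M\to0$ with $F_0=P\otimes_R(M/tM)$; then $F_1=\Ker(F_0\to M)$ is graded free (a graded submodule of a graded free module over the graded PID $P$), and $F_0,F_1$ are bounded below, so the inclusion $F_1\hookrightarrow F_0$ can, with care, be put in graded diagonal form, exhibiting $M\cong\Coker(F_1\to F_0)$ as a direct sum of degree shifts of $P$ and of $P/(t^\ell)$, that is, of modules $\Region[\alpha_i,\alpha_i+\ell_i)$.

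Under hypothesis (1), $M$ is pointwise finite-dimensional, so by the pointwise-finite case of the same classification \cite{DecompositionOfGradedModules} one may write $M\cong\bigoplus_i\Region(I_i)$ with each $I_i\subseteq\integer$ an interval; since $\integer$ is discrete, each $I_i$ is one of $[a,b)$, $[a,\infty)$, $(-\infty,b)$, $(-\infty,\infty)$, and the last two are exactly the indecomposable injective graded $P$-modules. Were some $I_i$ of one of those two types, the corresponding summand would be an injective submodule of $M$, contrary to hypothesis; hence every $I_i=[\alpha_i,\alpha_i+\ell_i)$. (To keep this self-contained one could instead write $M$ as the directed union of its submodules supported in degrees $\ge-N$, decompose each by case (2), use pointwise finiteness to stabilize these decompositions on every bounded window of degrees, and use the absence of an injective submodule to exclude an infinite chain of generators entering from $-\infty$.)

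The step I expect to be the real obstacle is the directness bookkeeping in case (2): the linear algebra at any single degree is routine, but since $M_g$ may be infinite-dimensional the induction cannot be run on finite data, and one must check that the infinitely many basis corrections are globally consistent. Boundedness below is precisely what makes that induction well-founded, and in the pointwise-finite form of case (1) the no-injective-submodule hypothesis plays the same role at the $-\infty$ end; without some such hypothesis the statement fails, since $\Region(-\infty,\infty)$ is pointwise finite-dimensional and indecomposable yet not a cyclic module.
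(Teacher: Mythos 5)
First, a point of comparison: the paper offers no proof of this statement at all --- it is quoted as a known theorem of Webb \cite{DecompositionOfGradedModules} --- so your proposal is being measured against a citation rather than an argument. Your translation into graded $R[t]$-module language is correct, and your treatment of case (1) is a legitimate derivation: granting the pointwise finite-dimensional interval decomposition (Theorem \ref{theorem:barcodedecomposition}, which the paper also takes as known and which does not logically depend on the present theorem), every interval in $G = \integer$ has one of the four shapes you list, the two left-infinite shapes are exactly the indecomposable injective graded $R[t]$-modules, and a direct summand of either shape would be an injective submodule, contradicting the hypothesis. That part stands, though it is anachronistic relative to Webb's own argument, and your parenthetical ``self-contained'' alternative (stabilizing decompositions of truncations along a directed union) is itself a nontrivial compactness argument that you only gesture at.

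Case (2) is where the genuine gap is, and you have located it yourself. Exhaustion is fine (your least-degree argument, well-founded precisely because $M$ is bounded below), and each cyclic submodule $Pv$ is a region module because every homogeneous ideal of $R[t]$ is $(0)$ or $(t^\ell)$. But directness of the sum is not bookkeeping --- it is the entire content of the theorem in this case, and the column-reduction picture you invoke is an algorithm for finitely generated modules only. In case (2) the graded pieces $M_g$ may be infinite-dimensional and the generating set uncountable, so one must actually do the following: at each stage $g$, choose a basis of $\Ker\pars{M_{g-1}\to M_g}$ intersected with the span of the surviving generators, adapted to the (finite, since birth degrees lie in $\brcs{0,\ldots,g-1}$) filtration by birth degree, which requires choosing complements inside infinite-dimensional spaces; assign to each such basis element a distinct ``pivot'' generator of maximal birth degree and replace that generator by the kernel element pulled back to its birth degree; and verify that each generator is thereby modified at most once (at the stage where its bar dies), so that the corrections have a well-defined limit and the resulting cyclic submodules sum directly to all of $M$. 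None of this is carried out. The alternative ``diagonal form'' packaging does not help: putting an injection of infinitely generated graded free modules into graded diagonal form is equivalent to the theorem, not a reduction of it. As written, your proposal is an accurate account of why the theorem is true and of where the difficulty lives, but it is not a proof of case (2); to complete it you would need to supply the stabilization argument above, or simply cite Webb as the paper does.
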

Crawley-Boevey \cite{DecompositionOfPointwiseFiniteDimensionalPersistenceModules}
	gave a different extension
	for locally finite modules.
Assuming $M$ is locally finite,
	$G$ is allowed to be any totally ordered set with
	a countable subset.
The requirements on $X$ can also be weakened
	because in \cite{DecompositionOfPointwiseFiniteDimensionalPersistenceModules},
	$G$ is not assumed to be translation invariant,
	but this is of little significance.
Here is a special case of the result by Crawley-Boevey
	that is more than sufficient for our use.
\begin{theorem}
	\label{theorem:barcodedecomposition}
	Let $R$ be a field, $X$ a submonoid of $\real_{\ge 0}$,
		$G$ a subset of $\real$, and
		$(X, G)$ a persistence grading.
	If $M$ is pointwise finite-dimensional, then
	\[
		M \cong \pars{\bigoplus_{i \in \mcA} \Region(\alpha_i, \alpha'_i)}
		\oplus \pars{\bigoplus_{i \in \mcB} \Region(\beta_i, \beta'_i]}
		\oplus \pars{\bigoplus_{i \in \mcC} \Region[\gamma_i, \gamma'_i]}
		\oplus \pars{\bigoplus_{i \in \mcD} \Region[\delta_i, \delta'_i)}
	\]
	where $\alpha_i < \alpha'_i$,
	$\beta_i < \beta'_i$,
	$\gamma_i \le \gamma'_i$ and
	$\delta_i < \delta'_i$.
	$\alpha_i, \beta_i$ and $\delta_i$ may be $-\infty$, while
	$\alpha'_i, \gamma'_i$ and $\delta'_i$ may be $\infty$.
\end{theorem}

\subsection*{Causality}
A persistence module $M$ is \emphb{causal} if
	for every $g \in G$, there exists $x \in X - \brcs{0}$
	such that $x' \preceq x$ implies $M_g^{x'}: M_g \to M_{x'g}$ is
	an isomorphism.
Regions of the forms $\Region(g, g')$, $\Region(g, g']$ and
	$\Region[g, g']$ are not causal.
If a persistence module $M$ can be decomposed as $M = M' \oplus M''$
	with $M'$ non-causal, then $M$ itself is non-causal.
Combining this with Theorem \ref{theorem:barcodedecomposition},
	we get
\begin{corollary}[Causal Barcode Decomposition]
	\label{corollary:causalbarcodedecomposition}
	Let $R$ be a field, $X$ a submonoid of $\real_{\ge 0}$,
		$G$ a subset of $\real$, and $(X, G)$ a persistence grading.
	If $M$ is pointwise finite-dimensional and causal, then
		it can be written as
	\[
		M \cong \bigoplus_{i \in \mcI}
			\Region[\alpha_i, \alpha_i + \ell_i)
	\]
	where $\alpha_i \in G$, $\ell_i > 0$, and $\ell_i$ may be $\infty$.
\end{corollary}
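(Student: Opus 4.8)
The plan is to deduce the corollary from Theorem \ref{theorem:barcodedecomposition} together with the two observations recorded just before it: region modules of the forms $\Region(g,g')$, $\Region(g,g']$ and $\Region[g,g']$ are non-causal, and a direct sum containing a non-causal summand is itself non-causal. The non-causality facts are a one-line inspection of the monomial homomorphisms of $\Region(S)$: at a finite top endpoint that lies in $S$ every $M^x$ with $x\ne 0$ is a map $R\to 0$ and hence not an isomorphism, while at a finite bottom endpoint that lies outside $S$ every $M^x$ with $x$ below a fixed element is a map $0\to R$, so no $x\in X-\brcs{0}$ can witness causality there; I would treat these as given.

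Concretely, I would first apply Theorem \ref{theorem:barcodedecomposition} to the pointwise finite-dimensional module $M$, decomposing it as a direct sum of region modules over convex intervals of the four types $(\alpha_i,\alpha'_i)$, $(\beta_i,\beta'_i]$, $[\gamma_i,\gamma'_i]$ and $[\delta_i,\delta'_i)$. Since $M$ is causal, the contrapositive of the second observation forces every summand of this decomposition to be causal. By the first observation, each summand possessing a finite top endpoint belonging to its support is non-causal, which removes every $\Region(\beta_i,\beta'_i]$ and every $\Region[\gamma_i,\gamma'_i]$ with $\gamma'_i<\infty$; and each summand possessing a finite bottom endpoint not belonging to its support is non-causal, which removes every $\Region(\alpha_i,\alpha'_i)$ with $\alpha_i\in G$. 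What can remain are the summands $\Region[\gamma_i,\infty)$, the summands $\Region[\delta_i,\delta'_i)$ with $\delta_i\in G$, and (to be handled below) summands whose left endpoint is $-\infty$. The first two families each already have the shape $\Region[\alpha,\alpha+\ell)$ with $\alpha\in G$ and $0<\ell\le\infty$, so collecting them and relabelling $\alpha_i:=\gamma_i$ or $\delta_i$ and $\ell_i:=\infty$ or $\delta'_i-\delta_i$ yields $M\cong\bigoplus_{i\in\mcI}\Region[\alpha_i,\alpha_i+\ell_i)$.

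The one point where more than the cited facts is needed --- and which I expect to be the only genuine obstacle --- is the $-\infty$ left endpoint permitted by Theorem \ref{theorem:barcodedecomposition}: a summand of the form $\Region(-\infty,\alpha'_i)$ has no birth grade in $G$ and so cannot be written as $\Region[\alpha_i,\alpha_i+\ell_i)$ with $\alpha_i\in G$, yet it need not be non-causal. Excluding such summands is precisely what the ``niceness'' of $M$ provides: if $M$ is bounded below (or merely $G$ has a least element) no such summand can occur, every interval of the decomposition acquires a genuine left endpoint in $G$, and the relabelling above is legitimate. The rest is routine bookkeeping.
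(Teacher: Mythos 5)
Your argument is essentially the paper's own: the printed proof of the corollary is just the sentence preceding it (the three non-causal region types, plus the fact that a decomposition with a non-causal summand makes the whole module non-causal), combined with Theorem \ref{theorem:barcodedecomposition}, and your elimination of the $\mcA$ and $\mcB$ summands and of the $\mcC$ summands with $\gamma'_i<\infty$ is exactly that combination. The one place you go beyond the paper is the $-\infty$ left endpoint, and you are right that it is a genuine issue --- but it is an issue with the statement of the corollary rather than with your proof. For instance, with $G=\real$ and $X=\real_{\ge 0}$, the module $\Region(-\infty,0)$ is pointwise finite-dimensional and causal (at any $g<0$ take $x=|g|/2$; at any $g\ge 0$ the relevant maps are $0\to 0$), yet it admits no decomposition $\bigoplus_{i\in\mcI}\Region[\alpha_i,\alpha_i+\ell_i)$ with $\alpha_i\in G$. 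The paper silently ignores this case; in its only application (Section \ref{section:main}) $M$ is additionally bounded, so no $-\infty$ endpoint can occur and the corollary is invoked correctly there. Your patch --- assuming $M$ bounded below, or $G$ with a least element --- therefore yields a correct statement adequate for the paper's purposes, but note that it is strictly weaker than the corollary as printed: to match the printed statement one must either allow $\alpha_i=-\infty$ in the conclusion or add your hypothesis explicitly.
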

This covers the first case of Theorem 
	\ref{theorem:decompositionofgradedmodules}
	as well as allowing $G$ and $X$ to be
	different from $\integer$ and $\integer_{\ge 0}$.

\section{Completeness of Exterior Critical Series
	in 1-Dimensional Case}
\label{section:main}
In this section, we assume that $R$ is a field,
	$G$ is a subset of $\real$,
	and $X$ is a submonoid of $\real_{\ge 0}$ acting on $G$ by addition.
(This means $a \in G$ implies $a + X \subseteq G$.)
Also, we assume
	$M$ is a persistence module that is finitely presented
	and bounded,
	hence it is necessarily causal and pointwise finite-dimensional.
Corollary \ref{corollary:causalbarcodedecomposition} gives
	an interval decomposition
\begin{equation}
	M \cong \bigoplus_{i=1}^n \Region[\alpha_i, \alpha_i + \ell_i)
	\label{eq:boundedfinitedecomposition}
\end{equation}
where $\alpha_i \in G$ and $\ell_i > 0$.
(Boundedness forces $\ell_i < \infty$.)
Since there is an embedding
	of $\PersMod(R, X, G)$ into $\PersMod(R, \real_{\ge 0}, \real)$,
	and such embedding preserves finite presentation and boundedness,
	we will assume that
	$X = \real_{\ge 0}$ and $G = \real$.
It should be clear how the main result, to be stated,
	can be applied to more general $G$ and $X$.
For ease of notation, we define $X_+ = \real_{> 0}$.

\begin{definition}
	A \emphb{barcode} of $M$ with the decomposition
		in \eqref{eq:boundedfinitedecomposition}, is
	\begin{equation}
		\varepsilon(M) =
		\sum_{i=1}^n x^{\alpha_i}y^{\ell_i} \in 
			\natural_0\brks{G \times X_+}
		\label{eq:finitebarcode}
	\end{equation}
	where $x$ and $y$ are indeterminates, and
	$\natural_0 = \brcs{0, 1, 2, \ldots}$.
\end{definition}
It is immediate that $\varepsilon(M)$, if exists, determines
	the isomorphism class of $M$.
It is also true that
	an arbitrary member of $\natural_0\brks{G \times X_+}$
	has a corresponding isomorphism class of
	persistence modules.
We will call a member of $\natural_0\brks{G \times X_+}$
	a \emphb{barcode}.

A monomial $b = x^{\alpha}y^{\ell}$ is
	called a \emphb{bar}.
The \emphb{birth grade of $b$} is $\alpha$.
The \emphb{lifespan of $b$} is $\ell$.
The \emphb{death grade of $b$} is $\alpha + \ell$.

A barcode is essentially a multiset of bars.
Suppose $f$ is a barcode
\[
	f = \sum_{i=1}^n x^{\alpha_i}y^{\ell_i}.
\]
We define
\begin{itemize}
\item
	The \emphb{birth series of $f$}:
	\[
		B(f) = \sum_{i=1}^n x^{\alpha_i} \in \natural_0[G].
	\]
\item
	The \emphb{death series of $f$}:
	\[
		D(f) = \sum_{i=1}^n x^{\alpha_i + \ell_i} \in \natural_0[G].
	\]
\item
	The \emphb{critical series of $f$}:
	\[
		C(f) = B(f) - D(f) \in \integer[G].
	\]
\item
	The \emphb{lifespan series of $f$}:
	\[
		L(f) = \sum_{i=1}^n x^{\ell_i} \in \natural_0[X_+].
	\]
\end{itemize}
Note that all these \emph{series} are actually \emph{polynomials}
	(with possibly non-integer exponents).
The maps $f \mapsto B(f)$, $f \mapsto D(f)$, $f \mapsto C(f)$ and
	$f \mapsto L(f)$ are $\integer$-module homomorphisms.

\begin{proposition}
\label{proposition:finitelypresentedmatching}
If $M$ is finitely presented and bounded,
	then $C(\varepsilon(M)) = \chi_{\mcO}(M)$.
\end{proposition}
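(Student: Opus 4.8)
The plan is to compute both sides by exploiting the fact that $\chi_{\mcO}$ is additive on direct sums (since $\mcO_g$ is an additive functor and taking Euler characteristics turns direct sums into sums), and that $C$ is a $\integer$-module homomorphism on barcodes; hence it suffices to verify the identity on a single interval module $\Region[\alpha, \alpha+\ell)$ with $\ell \in X_+$ finite. Concretely, write $M \cong \bigoplus_{i=1}^n \Region[\alpha_i,\alpha_i+\ell_i)$ as in \eqref{eq:boundedfinitedecomposition}; then $C(\varepsilon(M)) = \sum_i C\pars{x^{\alpha_i}y^{\ell_i}} = \sum_i \pars{x^{\alpha_i} - x^{\alpha_i+\ell_i}}$, and if I can show $\chi_\mcO\pars{\Region[\alpha,\alpha+\ell)}(g) = [g = \alpha] - [g = \alpha + \ell]$ (as a function of $g$, i.e.\ the element $x^\alpha - x^{\alpha+\ell}$ of $\integer[G]$), then summing over $i$ and using additivity of $\chi_\mcO$ finishes the proof.

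So the core of the argument is the single-bar computation. For a fixed $g \in G$ I need the $\mcO_g$-Euler characteristic of $N = \Region[\alpha, \alpha+\ell)$, which means I need a free resolution of $N$, apply $\mcO_g$, and take the alternating sum of ranks of the homology. The module $N$ is cyclic, generated in degree $\alpha$ with annihilator generated by the monomial $\ell$ (the element of $X_+$), so it has the short free resolution $0 \to \Region[\alpha+\ell,\infty) \xrightarrow{\cdot \ell} \Region[\alpha,\infty) \to N \to 0$, i.e.\ $0 \to F_1 \to F_0 \to N \to 0$ with $F_0, F_1$ free of rank one, generated in degrees $\alpha$ and $\alpha+\ell$ respectively. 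Applying $\mcO_g$ to the complex $0 \to F_1 \to F_0 \to 0$: since a free module generated in a single degree $\delta$ has lower sum equal to its whole value in every degree strictly above $\delta$, one gets $\mcO_g(\Region[\delta,\infty)) = R$ if $g = \delta$ and $0$ otherwise. Thus $\mcO_g(F_0)$ is $R$ exactly when $g = \alpha$, and $\mcO_g(F_1)$ is $R$ exactly when $g = \alpha + \ell$; since $\ell > 0$ these degrees are distinct, so at each $g$ at most one of the two terms is nonzero, the differential $\mcO_g(F_1) \to \mcO_g(F_0)$ is forced to be zero, and the homology modules are just $\mcO_g(F_0)$ in degree $0$ and $\mcO_g(F_1)$ in degree $1$. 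Hence $\chi\mcO_g(N) = \Rank(\mcO_g(F_0)) - \Rank(\mcO_g(F_1)) = [g=\alpha] - [g=\alpha+\ell]$, as claimed.

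The one point requiring care — and the main obstacle — is justifying that this alternating sum computed from the length-two resolution is the \emph{same} as the $\mcO_g$-Euler characteristic defined via an arbitrary free resolution; but this is exactly the content of the earlier remark that $H\mcF_*$ (and hence $\mcH\mcF$ and $\chi\mcF$) is independent of the choice of resolution because $\mcF$ is additive, so I may use the convenient length-two resolution. A secondary point is that I should confirm $N$ actually admits an $\mcO_g$-Euler characteristic for every $g$ (finitely many nonzero homology modules, each of finite rank), which is clear from the explicit computation above, and that the whole collection assembles into the element $\chi_\mcO(N) = x^\alpha - x^{\alpha+\ell}$ of $\integer[G]$ in the formal-series convention fixed earlier. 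Assembling: $\chi_\mcO(M) = \sum_{i=1}^n \chi_\mcO\pars{\Region[\alpha_i,\alpha_i+\ell_i)} = \sum_{i=1}^n \pars{x^{\alpha_i} - x^{\alpha_i+\ell_i}} = B(\varepsilon(M)) - D(\varepsilon(M)) = C(\varepsilon(M))$.
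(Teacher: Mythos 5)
Your proposal is correct and follows essentially the same route as the paper: both use the length-two free resolution $0 \to F_1 \to F_0 \to M \to 0$ coming from the interval decomposition and read off that $(H\mcO_g)_0$ counts births and $(H\mcO_g)_1$ counts deaths at $g$. The only difference is organizational — you split into single bars first and justify the vanishing of the differential and the computation $\mcO_g(\Region[\delta,\infty)) = R\,[g=\delta]$ explicitly, details the paper merely asserts.
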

\begin{proof}
	The decomposition \eqref{eq:boundedfinitedecomposition}
		suggests that
		$M$ has a two-step free resolution
	\[
		0 \to F_1 \stackrel{\psi}{\to} F_0 \stackrel{\phi}{\to} M \to 0
	\]
		such that
	\begin{itemize}
	\item
		$F_0$ is generated by $f_1, \ldots, f_n$;
	\item
		$F_1$ is generated by $h_1, \ldots, h_n$;
	\item
		$\deg(f_i) = \alpha_i$;
	\item
		$\deg(h_i) = \alpha_i + \ell_i$ (where $h_i$ are
		considered elements of $F_0$);
	\item
		$\Ann_X(\phi(f_i)) = [\ell_i, \infty)$.
	\end{itemize}
	Since the resolution has finite length
		and all these modules are finitely generated,
		$M$ admits a critical series.
	From the definition of $\mcO_g$,
		$\dim\pars{(H\mcO_g)_0(M)}$ is equal to the number
		of $i$ such that $\alpha_i = g$, and
		$\dim\pars{(H\mcO_g)_1(M)}$ is equal to the number of
		$i$ such that $\alpha_i + \ell_i = g$.
	This translates to
		$\mcH_{\mcO}(M) = B(\varepsilon(M)) + tD(\varepsilon(M))$
		(recall that $t$ is the indeterminate in $\mcH\mcO_g(M)$, $g \in G$),
		and so $C(\varepsilon(M)) = \chi_\mcO(M)$.
\end{proof}

\subsection{Tensor, Symmetric and Exterior Powers of Barcodes}
Assuming that $M$ can be decomposed as in
	\eqref{eq:boundedfinitedecomposition},
	there exist a set of homogeneous generators
	$g_1, \ldots, g_n$ of $M$ such that
	$\deg(g_i) = \alpha_i$ and
	$\Ann_X(g_i) = [\ell_i, \infty)$.
Each $g_i$ corresponds exactly to the monomial $x^{\alpha_i}y^{\ell_i}$
	in \eqref{eq:boundedfinitedecomposition}.

From this set of generators,
	we obtain a set of generators of
	$M^{\otimes p}$, the $p$-th tensor power of $M$.
There are $n^p$ generators, namely $g_{i_1} \otimes \ldots \otimes g_{i_p}$
	where $i_j \in \brcs{1, \ldots, n}$.
Suppose $g = g_{i_1} \otimes \ldots \otimes g_{i_p}$.
Then $\deg(g) = \sum_{j=1}^p \deg(g_{i_j}) = \sum_{j=1}^p \alpha_{i_j}$,
	and $\Ann_X(g) = \bigcup_{j=1}^p \Ann_X(g_{i_j})
	= [\min_j \ell_{i_j}, \infty)$.
The means the barcode of $M^{\otimes p}$ is
\begin{equation}
	\varepsilon\pars{M^{\otimes p}} =
	\sum_{i_1, \ldots, i_p \in \brcs{1, \ldots, n}}
		x^{\alpha_{i_1} + \ldots + \alpha_{i_p}}
		y^{\min\brcs{\ell_{i_1}, \ldots, \ell_{i_p}}}.
	\label{eq:tensorpowerofbarcode}
\end{equation}
In the same way, we can argue about the barcodes of
	$S^p(M)$ and $\Lambda^p(M)$ and obtain
\begin{align}
	\varepsilon\pars{S^p(M)} & =
	\sum_{1 \le i_1 \le i_2 \le \ldots \le i_p \le n}
		x^{\alpha_{i_1} + \ldots + \alpha_{i_p}}
		y^{\min\brcs{\ell_{i_1}, \ldots, \ell_{i_p}}}.
	\label{eq:symmetricpowerofbarcode}\\
	\varepsilon\pars{\Lambda^p(M)} & =
	\sum_{1 \le i_1 < i_2 < \ldots < i_p \le n}
		x^{\alpha_{i_1} + \ldots + \alpha_{i_p}}
		y^{\min\brcs{\ell_{i_1}, \ldots, \ell_{i_p}}}.
	\label{eq:exteriorpowerofbarcode}
\end{align}
We take \eqref{eq:tensorpowerofbarcode},
	\eqref{eq:symmetricpowerofbarcode} and
	\eqref{eq:exteriorpowerofbarcode} as definitions of
	the \emphb{$p$-th tensor power}, \emphb{symmetric power}
	and \emphb{exterior power} of barcodes:
\begin{align}
	\pars{\sum_{i=1}^n x^{\alpha_i}y^{\ell_i}}^{\otimes p}
	& = 
		\sum_{i_1, \ldots, i_p \in \brcs{1, \ldots, n}}
			x^{\alpha_{i_1} + \ldots + \alpha_{i_p}}
  		y^{\min\brcs{\ell_{i_1}, \ldots, \ell_{i_p}}} \\
	\pars{\sum_{i=1}^n x^{\alpha_i}y^{\ell_i}}^{\odot p}
	& = 
		\sum_{1 \le i_1 \le i_2 \le \ldots \le i_p \le n}
			x^{\alpha_{i_1} + \ldots + \alpha_{i_p}}
			y^{\min\brcs{\ell_{i_1}, \ldots, \ell_{i_p}}} \\
	\pars{\sum_{i=1}^n x^{\alpha_i}y^{\ell_i}}^{\wedge p}
	& = 
		\sum_{1 \le i_1 < i_2 < \ldots < i_p \le n}
			x^{\alpha_{i_1} + \ldots + \alpha_{i_p}}
			y^{\min\brcs{\ell_{i_1}, \ldots, \ell_{i_p}}}.
\end{align}
This makes $\varepsilon$ commute with the tensor power,
	symmetric power and exterior power:
	$\varepsilon(M^{\otimes p}) = \varepsilon(M)^{\otimes p}$,
	$\varepsilon(S^p(M)) = \varepsilon(M)^{\odot p}$ and
	$\varepsilon(\Lambda^p(M)) = \varepsilon(M)^{\wedge p}$.

Following the convention that $\Lambda^0(M) = R$,
	we define $f^{\wedge 0} = 1$ for all
	$f \in \natural_0\brks{G \times X_+} - \brcs{0}$.
Note that $1 \notin \natural_0\brks{G \times X_+}$
	as $\natural_0\brks{G \times X_+}$ is a proper ideal
	of $\natural_0\brks{G \times X}$,
	which has $1$.

For convenience, we also define the \emphb{$p$-th tensor power},
	\emphb{symmetric power} and \emphb{exterior power} of
	members of
	$\natural_0[G]$ by
\begin{align}
	\pars{\sum_{i=1}^n x^{\alpha_i}}^{\otimes p} & =
	\sum_{i_1, \ldots, i_p \in \brcs{1, \ldots, n}}
	x^{\alpha_{i_1} + \alpha_{i_2} + \ldots + \alpha_{i_p}}
	\label{eq:tensorpowerofseries}\\
	\pars{\sum_{i=1}^n x^{\alpha_i}}^{\odot p} & =
	\sum_{1\le i_1 \le i_2 \le \ldots \le i_p\le n}
	x^{\alpha_{i_1} + \alpha_{i_2} + \ldots + \alpha_{i_p}}
	\label{eq:symmetricpowerofseries}\\
	\pars{\sum_{i=1}^n x^{\alpha_i}}^{\wedge p} & =
	\sum_{1\le i_1 < i_2 < \ldots < i_p\le n}
	x^{\alpha_{i_1} + \alpha_{i_2} + \ldots + \alpha_{i_p}}.
	\label{eq:exteriorpowerofseries}
\end{align}

\subsection{Main Result} \label{section:mainresult}
Now that we have the correspondence between
	isomorphism classes of persistence modules
	and their barcodes,
	we will be working solely with polynomials in this section.

The \emphb{exterior critical series of barcode $f$} is
	a member of $\integer\brks{G \times \natural_0}$ defined by
\begin{equation}
	C^\wedge(f) = \sum_{p=1}^\infty C\pars{f^{\wedge p}}z^p
	\label{eq:exteriorcriticalseriesofbarcode}
\end{equation}
where $z$ is the second indeterminate.

Note that the sum is actually finite because
	$f^{\wedge p}$ will be zero for all $p > f|_{(x, y) = (1, 1)}$.
(Recall that the two indeterminates of $f$ are $x$ and $y$.)
The main focus of this paper is to prove
\begin{theorem} \label{theorem:main}
	The map $f \mapsto C^{\wedge}(f)$ is one-to-one.
\end{theorem}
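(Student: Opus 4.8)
The plan is to recover the barcode $f = \sum_{i=1}^{n} x^{\alpha_i} y^{\ell_i}$ from $C^{\wedge}(f)$ in two stages. It will be convenient to use the generating functions $\mcB(f) = \sum_{p \ge 0} B(f^{\wedge p}) z^p$ and $\mcD(f) = \sum_{p \ge 1} D(f^{\wedge p}) z^p$; from the definitions of $B$, $D$ and of exterior powers of barcodes one gets $\mcB(f) = \prod_{i=1}^{n} (1 + x^{\alpha_i} z)$ and $C^{\wedge}(f) = \mcB(f) - 1 - \mcD(f)$. The first stage is to recover $\mcB(f)$, and hence $\mcD(f)$, from $C^{\wedge}(f)$. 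Since $f^{\wedge n}$ is a single bar of positive lifespan, $C(f^{\wedge n}) \neq 0$, while $C(f^{\wedge p}) = 0$ for $p > n$, so $n$ is the degree in $z$ of $C^{\wedge}(f)$. For $1 \le p \le n$, every exponent appearing in $B(f^{\wedge p})$ has the form $\sum_{i \in S} \alpha_i$ with $|S| = p$, whereas every exponent appearing in $D(f^{\wedge p})$ has the form $\sum_{i \in S} \alpha_i + \min_{i \in S} \ell_i$, which is strictly larger than the minimum $\sigma_p$ of $\sum_{i \in S} \alpha_i$ because each $\ell_i > 0$. Hence the least exponent occurring in the coefficient of $z^p$ in $C^{\wedge}(f)$ equals $\sigma_p$ (the sum of the $p$ smallest $\alpha_i$), and it occurs with positive coefficient, so $\sigma_1, \dots, \sigma_n$ — and therefore the multiset $\{\alpha_1, \dots, \alpha_n\}$, via the differences $\sigma_p - \sigma_{p-1}$ — are determined, which gives $\mcB(f)$.

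The second stage, which is the heart of the matter, is the claim that a barcode $f$ is determined by the pair $(\mcB(f), \mcD(f))$; I would prove this by induction on the number of bars $n$. After sorting the bars so that $\ell_1 \le \cdots \le \ell_n$ and grouping each $p$-subset $S$ by its least element, one obtains the product formula $\mcD(f) = \sum_{j=1}^{n} x^{\alpha_j + \ell_j}\, z \prod_{i > j} (1 + x^{\alpha_i} z)$. Writing $\lambda_1 = \min_i \ell_i$ and letting $f'$ be the sub-barcode of all bars with $\ell_i > \lambda_1$, the terms with $\ell_j = \lambda_1$ telescope and yield the key identity
\[
	\mcD(f) - x^{\lambda_1}\mcB(f) = \mcD(f') - x^{\lambda_1}\mcB(f').
\]
From $(\mcB(f), \mcD(f))$ one then reads off $n = \deg_z \mcD(f)$ and next $\lambda_1$: the top $z$-coefficient of $\mcD(f)$ is $x^{A + \lambda_1}$, where $A = \sum_i \alpha_i$ is visible in $\mcB(f)$. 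Thus the left-hand side of the identity, hence $\mcD(f') - x^{\lambda_1}\mcB(f')$, is known. I would now rerun the non-cancellation argument of the first stage, coefficient by coefficient in $z$, on $\mcD(f') - x^{\lambda_1}\mcB(f')$: since every bar of $f'$ has lifespan strictly greater than $\lambda_1$, each exponent in $D((f')^{\wedge q})$ strictly exceeds the corresponding birth exponent plus $\lambda_1$, so the minimal exponents recover the birth multiset of $f'$, hence $\mcB(f')$, hence $\mcD(f') = \bigl(\mcD(f) - x^{\lambda_1}\mcB(f)\bigr) + x^{\lambda_1}\mcB(f')$. Since $f'$ has strictly fewer bars, the inductive hypothesis determines $f'$; the bars of $f$ of lifespan $\lambda_1$ are then recovered as $\sum_\alpha x^\alpha y^{\lambda_1}$ over the multiset difference between the births of $f$ and those of $f'$, and $f$ is the sum of $f'$ with those bars. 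Combining both stages: if $C^{\wedge}(f) = C^{\wedge}(g)$ then $\mcB(f) = \mcB(g)$ and $\mcD(f) = \mcD(g)$, so $f = g$.

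The step I expect to be the real obstacle is discovering this reduction — recognizing that peeling off the shortest bars via the displayed identity turns the reconstruction problem for $f$ into the same problem for the strictly smaller barcode $f'$, and that the birth data of $f'$ can be extracted from $\mcD(f) - x^{\lambda_1}\mcB(f)$ precisely by the strict separation between birth and death exponents (which is exactly where positivity of lifespans is used). The remaining ingredients — deriving the product formula and telescoping for $\mcD(f)$, handling the bookkeeping when several bars share the minimal lifespan (so that "peeling" removes a whole block at once), and the base case $n = 0$ — I expect to be routine.
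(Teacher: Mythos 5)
Your reconstruction is correct, but it follows a genuinely different route from the paper's. The paper recovers the \emph{lifespans} first, by applying the moment functional $\mu$ to each $C(f^{\wedge p})$ and solving a triangular linear system with binomial coefficients; it then recovers the drift $\Delta(f)$ from $C(f^{\wedge n})$, extracts $B(f)$ by analyzing $f^{\wedge(n-1)}$ through a special ``two lifespans with one outlier'' proposition, and finally inducts on the number of \emph{distinct} lifespans, using $f^{\wedge(n-m)}$ and $f^{\wedge(n-m-1)}$ to peel off the block of shortest bars. You instead recover the \emph{births} first, via the non-cancellation observation that every death exponent strictly exceeds its own birth exponent (positivity of lifespans), so the minimal exponent in the $z^p$-coefficient of $C^{\wedge}(f)$ survives and equals the sum of the $p$ smallest $\alpha_i$; this packages cleanly as $\mcB(f)=\prod_i(1+x^{\alpha_i}z)$ and hands you $\mcD(f)$ for free. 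Your induction is then on the number of bars, driven by the telescoping identity $\mcD(f)-x^{\lambda_1}\mcB(f)=\mcD(f')-x^{\lambda_1}\mcB(f')$, which I have checked (group the death terms by the least-index element of each subset and telescope the block of minimal-lifespan bars). Both proofs ultimately peel off the shortest bars, but yours replaces the paper's moment/derivative computations and its two special-case propositions with order-theoretic leading-term arguments and generating-function identities; this is arguably more elementary and more robust (it uses only the total order on $G$ and $\ell_i>0$, not division by $1-x^{\ell}$ or formal differentiation), at the cost of the slightly heavier generating-function bookkeeping you already acknowledge as routine.
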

The method of proof is to assume that $C^\wedge(f)$ is given,
	then construct $f$ from $C^\wedge(f)$.
Let us start by assuming that
\begin{equation}
	f = \sum_{i=1}^n x^{\alpha_i}y^{\ell_i}
	\label{eq:barcode}
\end{equation}
with $\ell_1 \le \ell_2 \le \ldots \ell_n$.
Then, for $p \in \natural$,
\begin{align}
	f^{\wedge p} & = \sum_{1 \le i_1 < i_2 < \ldots < i_p \le n}
		x^{\alpha_{i_1} + \ldots + \alpha_{i_p}}
		y^{\ell_{i_1}}
		\label{eq:exteriorpowers} \\
	B\pars{f^{\wedge p}} & = \sum_{1 \le i_1 < i_2 < \ldots < i_p \le n}
		x^{\alpha_{i_1} + \ldots + \alpha_{i_p}}
		\label{eq:birthpowers} \\
	D\pars{f^{\wedge p}} & = \sum_{1 \le i_1 < i_2 < \ldots < i_p \le n}
		x^{\alpha_{i_1} + \ldots + \alpha_{i_p} + \ell_{i_1}}
		\label{eq:deathpowers} \\
	C\pars{f^{\wedge p}} & = \sum_{1 \le i_1 < i_2 < \ldots < i_p \le n}
		x^{\alpha_{i_1} + \ldots + \alpha_{i_p}}\pars{
		1 - x^{\ell_{i_1}}}
		\label{eq:criticalpowers} \\
	L\pars{f^{\wedge p}} & = \sum_{1 \le i_1 < i_2 < \ldots < i_p \le n}
		x^{\ell_{i_1}}.
		\label{eq:lifespanpowers}
\end{align}
The first piece of information we can get from $C^{\wedge}(f)$ is $n$:
	$n$ is the largest integer such that
	$C(f^{\wedge n}) \ne 0$.
\begin{proposition} \label{proposition:numberofbars}
	The number of bars of $f$, which is equal to $n = f|_{(x, y) = (1, 1)}$,
		can be determined from $C^\wedge(f)$.
\end{proposition}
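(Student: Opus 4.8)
The plan is to recover $n$ as the top $z$-degree occurring in $C^\wedge(f)$. Recall from \eqref{eq:exteriorpowers} that the bars of $f^{\wedge p}$ are indexed by the strictly increasing $p$-tuples $1 \le i_1 < i_2 < \cdots < i_p \le n$, so $f^{\wedge p}$ has $\binom{n}{p}$ bars; in particular $f^{\wedge p} = 0$ for $p > n$, and $f^{\wedge n}$ has exactly one bar. Since $C$ is a $\integer$-module homomorphism, $C(f^{\wedge p}) = C(0) = 0$ for every $p > n$, so the coefficient of $z^p$ in $C^\wedge(f)$ vanishes for $p > n$.

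It then remains to check that $C(f^{\wedge n}) \ne 0$, so that $n$ is exactly the largest $p$ with a nonzero $z^p$-coefficient in $C^\wedge(f)$. For $p = n$ the only admissible tuple is $(1, 2, \ldots, n)$, so \eqref{eq:exteriorpowers} collapses to $f^{\wedge n} = x^{\alpha_1 + \cdots + \alpha_n} y^{\ell_1}$, and by \eqref{eq:criticalpowers} (or directly from the definition of $C$ on a single bar) $C(f^{\wedge n}) = x^{\alpha_1 + \cdots + \alpha_n}\pars{1 - x^{\ell_1}}$. This is a difference of two distinct basis elements of $\integer[G]$ precisely because $\ell_1 > 0$, so it is nonzero. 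Combining the two observations gives $n = \max\brcscond{p \ge 1}{C(f^{\wedge p}) \ne 0}$, and this quantity is visibly determined by $C^\wedge(f)$; by \eqref{eq:barcode} it equals $f|_{(x, y) = (1, 1)}$, the number of bars of $f$.

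There is no real obstacle here; the single point meriting care is the non-vanishing of $C(f^{\wedge n})$, which would fail only if the unique bar of $f^{\wedge n}$ had lifespan $0$, and that is ruled out by the standing hypothesis $\ell_i > 0$ for all $i$ (equivalently, bars lie in $G \times X_+$ with $X_+ = \real_{>0}$). Once $n$ is in hand, the subsequent steps of the proof of Theorem \ref{theorem:main} can use the lower-degree coefficients $C(f^{\wedge p})$, $p < n$, to pin down the individual births $\alpha_i$ and lifespans $\ell_i$.
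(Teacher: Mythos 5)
Your proof is correct and takes the same route as the paper, which simply observes from \eqref{eq:criticalpowers} that $n$ is the unique non-negative integer with $C(f^{\wedge n}) \ne 0$ and $C(f^{\wedge(n+1)}) = 0$. You merely make explicit the two ingredients the paper leaves implicit — that $f^{\wedge p} = 0$ for $p > n$ and that $C(f^{\wedge n}) = x^{\Delta(f)}\pars{1 - x^{\ell_1}} \ne 0$ because $\ell_1 > 0$ — so there is nothing further to add.
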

\begin{proof}
	From \eqref{eq:criticalpowers},
		$n$ is the unique non-negative integer such that
		$C(f^{\wedge n}) \ne 0$ and
		$C(f^{\wedge (n + 1)}) = 0$.
\end{proof}

\subsubsection{Moment} \label{section:moment}
For each $P \in \real[G]$, we define the \emphb{moment of $P$} by
\[
	\mu(P) = P'|_{x=1} \in \real
\]
where $P'$ is the formal derivative of $P$, and
	$P'|_{x=1}$ is the result from substitute $1$ into $x$ in $P'$.
A more explicit definition is
\[
	\mu\pars{\sum_{i=1}^n r_i x^{\alpha_i}}
	= \sum_{i=1}^n r_i \alpha_i.
\]
$\mu$ is obviously an $\real$-module homomorphism from $\real[G]$
	(or any of its subrings)
	to $\real$.

\subsubsection{Lifespan Series}
If $f$ is defined by \eqref{eq:barcode}, then
\[
	\mu(C(f)) = \mu(B(f)) - \mu(D(f))
	= \sum_{i=1}^n \alpha_i - \sum_{i=1}^n (\alpha_i + \ell_i)
	= -\sum_{i=1}^n \ell_i.
\]
By a counting argument and the condition that
	$\ell_1 \le \ell_2 \le \ldots \le \ell_n$,
	we get
\begin{equation}
	\mu(C(f^{\wedge p})) =
	-\sum_{i=1}^{n-p+1}
	\binom{n-i}{p-1} \ell_i =
	-\ell_{n-p+1} - \sum_{i=1}^{n-p}\binom{n-i}{p-1} \ell_i.
	\label{eq:measures}
\end{equation}
for $p \in \brcs{1, 2, \ldots, n}$.
Assuming that $C(f^{\wedge p})$ are given for all $p$, we can compute
	all $\mu(C^{\wedge p})$, and
	\eqref{eq:measures} is a system of $n$ linear equations
	with $\ell_i$ as knowns.
The solution to the system is given by the recursive formula
\[
	\ell_{i} = -\mu(C(f^{\wedge (n - i + 1)})) -
		\sum_{j=1}^{i - 1} \binom{n - j}{n - i}\ell_j.
\]
This means $L(f)$ can be found from $C^{\wedge}(f)$.

It is easy to see from \eqref{eq:lifespanpowers} that
	once $L(f)$ is known, $L(f^{\wedge p})$ can be computed
	for any $p \in \natural$.
Thus, we have proved
\begin{proposition} \label{proposition:lifespanseries}
	$L(f)$, hence all $L(f^p)$, can be determined from
		$C^\wedge(f)$.
\end{proposition}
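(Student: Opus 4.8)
The plan is to recover the \emph{multiset} of lifespans $\{\ell_1,\dots,\ell_n\}$ from $C^\wedge(f)$ by applying the moment functional $\mu$ to the critical series of the exterior powers, and then to read off $L(f)$ and all $L(f^{\wedge p})$ from that multiset. The point to keep in mind is that we are given $C^\wedge(f)$, not a presentation of $f$, so every quantity used in the recovery must ultimately be a function of $C^\wedge(f)$ alone; a specific sorted presentation of $f$ is used only as a computational device.

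First I would fix any presentation $f=\sum_{i=1}^n x^{\alpha_i}y^{\ell_i}$ with $\ell_1\le\ell_2\le\dots\le\ell_n$, which is possible for every barcode. By Proposition~\ref{proposition:numberofbars} the integer $n$ is already determined by $C^\wedge(f)$, and by the definition \eqref{eq:exteriorcriticalseriesofbarcode} the coefficient of $z^p$ in $C^\wedge(f)$ is exactly $C(f^{\wedge p})$; hence each $C(f^{\wedge p})$, and therefore each real number $\mu\bigl(C(f^{\wedge p})\bigr)$, is determined by $C^\wedge(f)$.

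Next comes the computation already indicated in \eqref{eq:measures}. Using \eqref{eq:birthpowers}--\eqref{eq:deathpowers}, the term of $B(f^{\wedge p})$ indexed by $i_1<\dots<i_p$ is carried to the corresponding term of $D(f^{\wedge p})$ by multiplication by $x^{\ell_{i_1}}$ --- here the sorting is used, since $\ell_{i_1}=\min\{\ell_{i_1},\dots,\ell_{i_p}\}$ --- so $\mu$ and linearity give $\mu\bigl(C(f^{\wedge p})\bigr)=-\sum_{i_1<\dots<i_p}\ell_{i_1}$, and grouping by the value of $i_1$ (with $\binom{n-i_1}{p-1}$ completions of the remaining indices) yields \eqref{eq:measures}. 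Reading this for $p=n,n-1,\dots,1$ as a linear system in the unknowns $\ell_1,\dots,\ell_n$: in the equation for $p=n-i+1$ the coefficient of $\ell_i$ is $\binom{n-i}{n-i}=1$ while $\binom{n-j}{n-i}=0$ for $j>i$, so the system is triangular with unit diagonal and is solved by the displayed recursion, recovering $\ell_1,\dots,\ell_n$ and hence $L(f)=\sum_i x^{\ell_i}$.

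Finally, $L(f)$ records precisely the multiset $\{\ell_i\}$, and by \eqref{eq:lifespanpowers} $L(f^{\wedge p})=\sum_{i_1<\dots<i_p}x^{\ell_{i_1}}$ depends only on that multiset (and the analogous statements hold for tensor and symmetric powers), so all of these are determined by $C^\wedge(f)$ as well. I do not expect a genuine obstacle here: the substantive step is the moment identity \eqref{eq:measures}, and its only delicate feature is the vanishing/unit behaviour of the binomial coefficients that makes the linear system triangular --- everything else is bookkeeping with $\mu$ and the definitions of the exterior powers of barcodes.
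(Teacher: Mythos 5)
Your proof is correct and follows the same route as the paper: apply the moment $\mu$ to each $C(f^{\wedge p})$, derive the identity \eqref{eq:measures} by grouping tuples according to their minimal index, and solve the resulting triangular linear system for $\ell_1,\dots,\ell_n$. The extra detail you supply (why the binomial coefficients make the system unitriangular, and why $L(f^{\wedge p})$ depends only on the lifespan multiset) is exactly what the paper leaves implicit, so there is nothing to add.
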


\subsubsection{Drift}
With $f$ in \eqref{eq:barcode},
	define the \emphb{drift of $f$} as
\[
	\Delta(f) = \sum_{i=1}^n \alpha_i
	= \mu\pars{f|_{y=1}}.
\]
The drift can be computed by substituting $y = 1$
	then computing the moment, so
	$\Delta$ is an $\real$-module homomorphism.
Direct computation yields
\begin{equation}
	\Delta(f^{\wedge p}) = \binom{n - 1}{p - 1}\Delta(f).
	\label{eq:driftofpower}
\end{equation}
When $p = n$, we get $\Delta(f^{\wedge n}) = \Delta(f)$.
However, $f^{\wedge n}$ has only one bar:
	$f^{\wedge n} = x^{\Delta(f)}y^{\ell_1}$.
We see that
	$C(f^{\wedge n}) = x^{\Delta(f)} - x^{\Delta(f) + \ell_1}$,
	so
\begin{align}
	x^{\Delta(f)} & = \frac{1}{1 - x^{\ell_1}}C(f^{\wedge n}) \nonumber\\
	\therefore \quad
	\Delta(f) & = \mu\pars{\frac{1}{1 - x^{\ell_1}}C(f^{\wedge n})},
	\label{eq:drift}
\end{align}
where the last equation comes from computing the moment on both sides.
This, combined with \eqref{eq:driftofpower}, proves
\begin{proposition} \label{proposition:drift}
	$\Delta(f^{\wedge p})$,
		for any $p \in \natural$, can be obtained from
		$n$ and $C(f^{\wedge n})$.
\end{proposition}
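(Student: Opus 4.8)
The plan is to follow the two computations sketched just above \eqref{eq:driftofpower} and \eqref{eq:drift}, and then to verify that every quantity invoked is recoverable from $C^\wedge(f)$. First I would establish the identity \eqref{eq:driftofpower} by a direct counting argument: in the expansion \eqref{eq:exteriorpowers} of $f^{\wedge p}$, each index $j \in \{1, \ldots, n\}$ contributes $\alpha_j$ to exactly $\binom{n-1}{p-1}$ of the bars, namely those indexed by the $p$-subsets of $\{1, \ldots, n\}$ that contain $j$. Hence substituting $y = 1$ and applying $\mu$ gives $\Delta(f^{\wedge p}) = \sum_{j=1}^n \binom{n-1}{p-1}\alpha_j = \binom{n-1}{p-1}\Delta(f)$; the same bookkeeping covers the degenerate cases ($\binom{n-1}{p-1} = 0$ once $p > n$, matching $f^{\wedge p} = 0$).

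Next I would pin down $\Delta(f)$ itself. Since the only strictly increasing $n$-tuple in $\{1, \ldots, n\}$ is $(1, 2, \ldots, n)$, formula \eqref{eq:exteriorpowers} collapses to a single bar, $f^{\wedge n} = x^{\alpha_1 + \cdots + \alpha_n}y^{\ell_1} = x^{\Delta(f)}y^{\ell_1}$, using $\ell_1 \le \cdots \le \ell_n$. Therefore $C(f^{\wedge n}) = x^{\Delta(f)} - x^{\Delta(f) + \ell_1} = x^{\Delta(f)}(1 - x^{\ell_1})$, which is nonzero because $\ell_1 > 0$. Dividing by $1 - x^{\ell_1}$ leaves the monomial $x^{\Delta(f)}$, and applying $\mu$ to both sides yields \eqref{eq:drift}, namely $\Delta(f) = \mu\!\left(\frac{1}{1 - x^{\ell_1}}C(f^{\wedge n})\right)$. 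More elementarily: $C(f^{\wedge n})$ consists of exactly two monomials, $x^a$ with coefficient $+1$ and $x^b$ with coefficient $-1$ where $a < b$, so one simply reads off $\Delta(f) = a$ (and incidentally $\ell_1 = b - a$).

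Finally I would assemble the claim. By Proposition \ref{proposition:numberofbars}, $n$ is determined by $C^\wedge(f)$; the polynomial $C(f^{\wedge n})$ is precisely the coefficient of $z^n$ in $C^\wedge(f)$, hence also determined; the previous paragraph extracts $\Delta(f)$ from these (with $\ell_1$ available either directly or from Proposition \ref{proposition:lifespanseries}); and \eqref{eq:driftofpower} then gives $\Delta(f^{\wedge p}) = \binom{n-1}{p-1}\Delta(f)$ for every $p \in \natural$, which is what is asserted. The only point needing a moment's care is the division by $1 - x^{\ell_1}$: one should note that $C(f^{\wedge n})/(1 - x^{\ell_1})$ is a genuine element of $\real[G]$ (in fact the single monomial $x^{\Delta(f)}$) and not merely a formal power series, so that $\mu$ applies — but this is immediate once $f^{\wedge n}$ is recognized as a single bar, and is sidestepped entirely by the two-monomial observation. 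I do not anticipate any substantive obstacle beyond this bookkeeping.
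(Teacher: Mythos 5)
Your proof is correct and follows essentially the same route as the paper: establish $\Delta(f^{\wedge p}) = \binom{n-1}{p-1}\Delta(f)$ by counting, recognize $f^{\wedge n}$ as the single bar $x^{\Delta(f)}y^{\ell_1}$, and extract $\Delta(f)$ from $C(f^{\wedge n})$ via division by $1 - x^{\ell_1}$ and the moment map. Your added observation that $\Delta(f)$ and $\ell_1$ can be read directly off the two monomials of $C(f^{\wedge n})$ is a nice touch that makes the statement's ``from $n$ and $C(f^{\wedge n})$'' literally self-contained, but it is not a different argument.
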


\subsubsection{The Case of Single Lifespan}
If the lifespan series $L(f)$ consists
	of only one monomial, \ie, all bars in $f$ have the same lifespan,
	then $L(f) = nx^\ell$, $f = \sum_{i=1}^n x^{\alpha_i}y^{\ell}$,
	and $D(f) = x^\ell B(f)$.
Since $C(f) = B(f) - D(f) = \pars{1 - x^\ell}B(f)$,
\[
  B(f) = \frac{C(f)}{1 - x^\ell}.
\]
Also, $f = \sum_{i=1}^n x^{\alpha_i}y^\ell = y^\ell B(f)$, hence
\begin{equation}
	f = \frac{y^\ell}{1 - x^\ell}C(f). \label{eq:singlelifespan}
\end{equation}
We have shown that
\begin{proposition} \label{proposition:singlelifespan}
	The map $f \mapsto C(f)$ is one-to-one if the domain is restricted to
	\[
		\brcs{\sum_{i=1}^n x^{\alpha_i}y^{\ell}}
	\]
		with $\ell$ fixed.
\end{proposition}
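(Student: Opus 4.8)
The statement is essentially a rephrasing of the displayed computation that immediately precedes it, so the plan is to turn that computation into a clean injectivity argument. Fix the common lifespan $\ell \in X_+$ and let $f = \sum_{i=1}^n x^{\alpha_i}y^{\ell}$ be an arbitrary barcode in the restricted domain. First I would record the three identities that hold precisely because every bar of $f$ shares the lifespan $\ell$: namely $f = y^{\ell}B(f)$, then $D(f) = x^{\ell}B(f)$, and therefore $C(f) = B(f) - D(f) = \pars{1 - x^{\ell}}B(f)$, all regarded as elements of the appropriate subrings of $\integer\brks{G \times X_+}$.

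The crux is to recover $B(f)$, and hence $f$, from $C(f)$. Since $\ell > 0$ and $C(f)$ is a finite $\integer$-linear combination of monomials $x^{g}$, the element $1 - x^{\ell}$ can be divided out: formally $B(f) = \pars{\sum_{k=0}^{\infty} x^{k\ell}}C(f)$, and this product is a well-defined member of $\integer[G]$ because the support of $C(f)$ is bounded below, so for each grade only finitely many of the terms $x^{k\ell}C(f)$ contribute. (Equivalently, one may observe that $\integer[G]$ is the group algebra of the torsion-free ordered group $(G,+) = (\real,+)$, hence an integral domain, so multiplication by the nonzero element $1 - x^{\ell}$ is injective.) Either way $B(f)$ is uniquely determined by $C(f)$, which is the formula $B(f) = \parsfrac{1}{1-x^{\ell}}C(f)$, and then $f = y^{\ell}B(f) = \frac{y^{\ell}}{1-x^{\ell}}C(f)$ exactly as in \eqref{eq:singlelifespan}.

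Finally I would assemble the injectivity conclusion: if $f$ and $g$ both lie in the restricted domain and $C(f) = C(g)$, then $B(f) = B(g)$ by the previous step, whence $f = y^{\ell}B(f) = y^{\ell}B(g) = g$. I do not anticipate a genuine obstacle; the only point that deserves a sentence of care is the justification that $1 - x^{\ell}$ may legitimately be divided out, which is handled by the bounded-support remark (or the integral-domain remark) above. It is worth noting in passing that restricting the domain is essential here: from $C(f)$ alone, without knowing $\ell$ or that all lifespans coincide, $f$ cannot be reconstructed, which is precisely why the full Theorem~\ref{theorem:main} must bring in the higher exterior powers.
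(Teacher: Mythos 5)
Your proposal is correct and follows exactly the paper's argument: write $C(f) = \pars{1 - x^{\ell}}B(f)$, divide by $1 - x^{\ell}$ to recover $B(f)$, and conclude $f = y^{\ell}B(f)$, which is precisely \eqref{eq:singlelifespan}. The only addition is your explicit justification that division by $1 - x^{\ell}$ is legitimate (via the bounded-support geometric series or the integral-domain observation), a point the paper leaves implicit.
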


\subsubsection{The Case of Two Lifespans with One Outlier}
If we know beforehand that in a barcode $f$,
	one bar has lifespan of $\tilde\ell$ and all other bars
	have the same lifespan of $\ell \ne \tilde\ell$,
	it is possible to extract $f$ by using
	$C(f)$ and $\Delta(f)$.
We demonstrate the procedure below.

Suppose $\ell_j = \tilde\ell$ ($j$ fixed) and $\ell_i = \ell$
	for $i \ne j$.
(According to our ordering $\ell_1 \le \ell_2 \le \ldots \ell_n$,
	we must have either $j = 1$ or $j = n$.)
We can determine $\alpha_j$ as follows:
\begin{align*}
	f & = x^{\alpha_j}\pars{y^{\tilde\ell} - y^{\ell}} +
		\sum_{i=1}^n x^{\alpha_i}y^{\ell} \\
	C(f) & =
		\pars{x^{\alpha_j + \ell} - x^{\alpha_j + \tilde\ell}} +
		\sum_{i=1}^n \pars{x^{\alpha_i} - x^{\alpha_i + \ell}} \\
	xC(f)' & =
		\pars{(\alpha_j + \ell)x^{\alpha_j + \ell}
			- (\alpha_j + \tilde\ell)x^{\alpha_j + \tilde\ell}} +
		\sum_{i=1}^n \pars{\alpha_i x^{\alpha_i} -
		(\alpha_i + \ell)x^{\alpha_i + \ell}} \\
	\mu(xC(f)') & =
		\pars{(\alpha_j + \ell)^2 - (\alpha_j + \tilde\ell)^2} +
		\sum_{i=1}^n \pars{\alpha_i^2 - (\alpha_i + \ell)^2} \\
	& = (2\alpha_j + \ell + \tilde\ell)
	(\ell - \tilde\ell) - \ell\sum_{i=1}^n (2\alpha_i + \ell)\\
	& = 2\alpha_j(\ell - \tilde\ell) +
	(\ell + \tilde\ell)(\ell - \tilde\ell)
	- 2\ell\Delta(f) - n\ell^2 \\
	\therefore \quad
	\alpha_j & = \frac{1}{2(\ell - \tilde\ell)}
	\pars{
		\mu(xC(f)') - (\ell + \tilde\ell)(\ell - \tilde\ell) + 2\ell\Delta(f)
		+ n\ell^2
	}.
\end{align*}
This means we can compute $\alpha_j$ from $\tilde\ell$, $\ell$, $C(f)$
	and $\Delta(f)$.
Next, let
\[
	\tilde f = f - x^{\alpha_j}y^{\tilde\ell}
	= \sum_{\substack{1\le i \le n\\i \ne j}} x^{\alpha_i}y^\ell.
\]
$\tilde f$ has $n - 1$ bars, all of which have equal lifespan
	of $\ell$, and
	$C(\tilde f) = C(f) - C(x^{\alpha_j}y^{\ell_j})
	= C(f) - \pars{x^{\alpha_j} - x^{\alpha_j + \tilde\ell}}$.
Hence, we can use Proposition \ref{proposition:singlelifespan}
	(equation \eqref{eq:singlelifespan})
	to get $\tilde f$ from $C(\tilde f)$,
	and finally obtain $f$ from $f = \tilde f + x^{\alpha_j}y^{\tilde\ell}$.
This result, combined with Proposition \ref{proposition:singlelifespan}, gives
\begin{proposition} \label{proposition:twolifespans}
	The map $f \mapsto C(f)$ is one-to-one if the domain is restricted to
	\[
	\brcscond{f = x^{\tilde\alpha}y^{\tilde\ell}
	+ \sum_{i=1}^n x^{\alpha_i}y^{\ell_i}}{\Delta(f) = \delta}
	\]
	with $\ell, \tilde\ell$ and $\delta$ fixed.
\end{proposition}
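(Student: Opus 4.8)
The plan is to turn the procedure sketched just above the statement into an explicit inversion algorithm: from $C(f)$ together with the fixed data $\ell$, $\tilde\ell$ and $\delta$ I will reconstruct $f$, and this reconstruction is exactly what injectivity of $f \mapsto C(f)$ on the restricted domain means. First I would dispatch the degenerate case $\ell = \tilde\ell$: then every bar of $f$ has the single lifespan $\ell$, so $f$ lies in the domain of Proposition~\ref{proposition:singlelifespan}, and injectivity there (restricted to the further condition $\Delta(f) = \delta$) gives the claim, the datum $\delta$ not even being needed. So assume henceforth $\ell \ne \tilde\ell$.

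Next I would read off the number of bars. Since $C$ sends a bar $x^{\alpha}y^{\beta}$ to $x^{\alpha} - x^{\alpha+\beta}$ and $\mu$ is the ``first moment'' $P \mapsto P'|_{x=1}$, one has $\mu(C(f)) = -\tilde\ell - n\ell$, i.e.\ the negated sum of all lifespans; because $\ell > 0$ and $\ell, \tilde\ell$ are known, this pins down $n$ and hence the total bar count. I would then recover the birth grade $\tilde\alpha$ of the outlier bar exactly as in the displayed computation preceding the statement: writing $f = x^{\tilde\alpha}(y^{\tilde\ell} - y^{\ell}) + \sum_i x^{\alpha_i}y^{\ell}$, applying $C$, and then the ``second moment'' functional $P \mapsto \mu(xP')$, one obtains a relation that is affine in $\tilde\alpha$ with leading coefficient $2(\ell - \tilde\ell)$ and whose remaining terms involve only $\mu(xC(f)')$, $\ell$, $\tilde\ell$, $\Delta(f) = \delta$ and the (now known) bar count; since $\ell \ne \tilde\ell$ this solves uniquely for $\tilde\alpha$.

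With $\tilde\alpha$ in hand I would peel off the outlier. Because $C$ is a $\integer$-module homomorphism, $C(\tilde f) = C(f) - (x^{\tilde\alpha} - x^{\tilde\alpha+\tilde\ell})$ for $\tilde f := f - x^{\tilde\alpha}y^{\tilde\ell} = \sum_i x^{\alpha_i}y^{\ell}$, which is computable from $C(f)$ alone. Now $\tilde f$ has all of its bars of the single lifespan $\ell$, so Proposition~\ref{proposition:singlelifespan} (explicitly, $\tilde f = \frac{y^{\ell}}{1 - x^{\ell}}\,C(\tilde f)$) recovers $\tilde f$, and then $f = \tilde f + x^{\tilde\alpha}y^{\tilde\ell}$. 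Every step was a deterministic computation from $C(f)$ and the fixed parameters, so $f$ is uniquely determined by $C(f)$ on this domain, which is the claim.

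The one genuinely delicate step is isolating $\tilde\alpha$, and it is precisely what shapes the hypothesis: the first moment of $C(f)$ recovers only the bar count, the second moment couples $\tilde\alpha$ with the drift $\Delta(f)$, and passing to still higher moments only introduces new unknowns such as $\sum_i \alpha_i^2$, so the system never closes on its own. This is why $\Delta(f) = \delta$ must be supplied as part of the data, and why the case split at $\ell = \tilde\ell$ is unavoidable, the reconstruction formula for $\tilde\alpha$ dividing by $\ell - \tilde\ell$.
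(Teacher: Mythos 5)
Your proposal is correct and follows essentially the same route as the paper: apply the second-moment functional $\mu(xC(f)')$ to get an affine equation in $\tilde\alpha$ with coefficient $2(\ell-\tilde\ell)$, solve it using the supplied drift $\delta$, subtract the outlier bar, and finish with Proposition~\ref{proposition:singlelifespan}. Your added remarks --- recovering $n$ from $\mu(C(f)) = -(\tilde\ell + n\ell)$ and dispatching the degenerate case $\ell = \tilde\ell$ separately --- tidy up details the paper leaves implicit, but do not change the argument.
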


\subsubsection{Extracting Birth and Death Series}
Suppose we are given $C^{\wedge}(f)$,
	where $f$ is of the form \eqref{eq:barcode}.
Consider the $(n-1)$-th exterior power of $f$:
\begin{align*}
	f^{\wedge(n - 1)} & =
		x^{\Delta(f) - \alpha_1}y^{\ell_2} +
		\sum_{i=2}^n x^{\Delta(f) - \alpha_i}y^{\ell_1}.
\end{align*}
We see that one bar in $f^{\wedge(n-1)}$ has
	lifespan of $\ell_2$ while all other bars have lifespan of $\ell_1$.
(All $\ell_i$ are known by
	Proposition \ref{proposition:lifespanseries}.)
$\Delta(f^{\wedge(n-1)})$ is known by
	Proposition \ref{proposition:drift}.
Hence, Proposition \ref{proposition:twolifespans} applies to give us
	$f^{\wedge(n - 1)}$.
We can then compute $B(f^{\wedge(n-1)})$ and relate it to $B(f)$:
\[
	B(f^{\wedge(n-1)}) = \sum_{i=1}^n x^{\Delta(f) - \alpha_i}
	= x^{\Delta(f)}\sum_{i=1}^n x^{-\alpha_i}.
\]
Replacing $x$ by $1/x$, we get
\[
	B(f) = x^{\Delta(f)}B(f^{\wedge(n-1)})|_{1/x}
\]
where $B(f^{\wedge(n-1)})|_{1/x}$ is the result from
	replacing $x$ by $1/x$ in $B(f^{\wedge(n-1)})$.
Now that $B(f)$ has been determined,
	\eqref{eq:exteriorpowerofseries} and
	\eqref {eq:birthpowers} give
	$B(f^{\wedge p}) = B(f)^{\wedge p}$ for all $p \in \natural$.
Then, $D(f^{\wedge p})$ can be obtained from
	$D(f^{\wedge p}) = C(f^{\wedge p}) + B(f^{\wedge p})$.
We summarize these results as
\begin{proposition} \label{proposition:birthseries}
	For any $p \in \natural$,
		$B(f^{\wedge p})$ and $D(f^{\wedge p})$ can be obtained
		from $C^{\wedge}(f)$.
\end{proposition}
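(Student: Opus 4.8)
The plan is to recover the birth series $B(f)$ from $C^{\wedge}(f)$ by exploiting the rigid shape of the $(n-1)$-th exterior power of $f$: it is a barcode carrying only two lifespans, one of which occurs a single time, so that Proposition~\ref{proposition:twolifespans} pins it down. Once $B(f)$ is known, all $B(f^{\wedge p})$ and $D(f^{\wedge p})$ follow by routine algebra.

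First I would dispose of the small cases. The integer $n$ is recovered from $C^{\wedge}(f)$ by Proposition~\ref{proposition:numberofbars}; if $n = 0$ the claim is vacuous, and if $n = 1$ then $f = x^{\alpha_1}y^{\ell_1}$ with $\ell_1$ known from Proposition~\ref{proposition:lifespanseries}, so $B(f) = C(f)/(1-x^{\ell_1})$ (this is Proposition~\ref{proposition:singlelifespan}). So assume $n \ge 2$. By \eqref{eq:exteriorpowers} the size-$(n-1)$ subsets of $\{1,\dots,n\}$ are exactly the complements of singletons, hence
\[
	f^{\wedge(n-1)} = x^{\Delta(f)-\alpha_1}y^{\ell_2} + \sum_{i=2}^{n} x^{\Delta(f)-\alpha_i}y^{\ell_1},
\]
a barcode with one bar of lifespan $\ell_2$ and $n-1$ bars of lifespan $\ell_1$ (using the ordering $\ell_1 \le \dots \le \ell_n$). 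Now $C(f^{\wedge(n-1)})$ is the coefficient of $z^{n-1}$ in $C^{\wedge}(f)$; the lifespans $\ell_1,\ell_2$ are known from Proposition~\ref{proposition:lifespanseries}; and $\Delta(f^{\wedge(n-1)})$ is known from Proposition~\ref{proposition:drift}. If $\ell_1 \ne \ell_2$, Proposition~\ref{proposition:twolifespans} (with outlier lifespan $\ell_2$, common lifespan $\ell_1$, and drift $\Delta(f^{\wedge(n-1)})$) recovers $f^{\wedge(n-1)}$ explicitly; if $\ell_1 = \ell_2$, the power $f^{\wedge(n-1)}$ has a single lifespan and Proposition~\ref{proposition:singlelifespan} does the same job. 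Either way we obtain $B(f^{\wedge(n-1)})$.

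To return to $B(f)$, observe that $B(f^{\wedge(n-1)}) = \sum_{i=1}^{n} x^{\Delta(f)-\alpha_i} = x^{\Delta(f)}\sum_{i=1}^{n} x^{-\alpha_i}$, so substituting $x \mapsto 1/x$ yields $B(f) = x^{\Delta(f)}\,B(f^{\wedge(n-1)})|_{1/x}$, where $\Delta(f) = \Delta(f^{\wedge 1})$ is again available from Proposition~\ref{proposition:drift}. With $B(f)$ in hand, \eqref{eq:exteriorpowerofseries} and \eqref{eq:birthpowers} give $B(f^{\wedge p}) = B(f)^{\wedge p}$ for every $p \in \natural$, and then $D(f^{\wedge p})$ is forced by $C(f^{\wedge p}) = B(f^{\wedge p}) - D(f^{\wedge p})$, with $C(f^{\wedge p})$ read off as the coefficient of $z^p$ in $C^{\wedge}(f)$.

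The only genuinely delicate point is checking that $f^{\wedge(n-1)}$ really satisfies the hypotheses of Proposition~\ref{proposition:twolifespans} — that its lifespan multiset is exactly $\{\ell_1,\dots,\ell_1,\ell_2\}$, which rests on the ordering of the $\ell_i$ together with the combinatorics of $(n-1)$-element subsets, and that the three parameters that proposition fixes (the two lifespans and the drift) are all extractable from $C^{\wedge}(f)$ via the earlier propositions. The case split $\ell_1 < \ell_2$ versus $\ell_1 = \ell_2$ must be made explicit, but it is harmless because Proposition~\ref{proposition:singlelifespan} covers the degenerate branch. Everything downstream of the extraction of $f^{\wedge(n-1)}$ — the reciprocal substitution and the exterior-power identity for $B$ — is elementary manipulation of polynomials.
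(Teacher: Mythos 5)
Your proof is correct and follows essentially the same route as the paper: recover $f^{\wedge(n-1)}$ via Proposition \ref{proposition:twolifespans} (its lifespans known from Proposition \ref{proposition:lifespanseries}, its drift from Proposition \ref{proposition:drift}), obtain $B(f) = x^{\Delta(f)}B(f^{\wedge(n-1)})|_{1/x}$, and then deduce $B(f^{\wedge p}) = B(f)^{\wedge p}$ and $D(f^{\wedge p}) = B(f^{\wedge p}) - C(f^{\wedge p})$. Your explicit treatment of the degenerate cases $n \le 1$ and $\ell_1 = \ell_2$ (falling back on Proposition \ref{proposition:singlelifespan}) is a minor refinement that the paper's proof leaves implicit.
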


\subsubsection{Proof of Theorem \ref{theorem:main}}
Suppose $f$ is of the form \eqref{eq:barcode} but
	we are given just $C^{\wedge}(f)$.
We know $n$ (the number of bars in $f$)
	from Proposition \ref{proposition:numberofbars}.
From Proposition \ref{proposition:lifespanseries},
	we know $L(f)$, and so all $\ell_i$.
We will prove the theorem by induction
	on the number of distinct lifespans.

The base case is when
	$\ell_1 = \ell_2 = \ell_3 = \ldots = \ell_n$.
Proposition \ref{proposition:singlelifespan}
	gives $f$ and we are done.

Otherwise,
	there exists $m \in \brcs{1, 2, \ldots, n - 1}$ such that
	$\ell_1 = \ell_2 = \ldots = \ell_m < \ell_{m+1}
	\le \ldots \le \ell_n$.
Let $\ell = \ell_1 = \ldots = \ell_m$ and
	$\tilde f = \sum_{i=m+1}^n x^{\alpha_i}y^{\ell_i}$.
Consider $f^{\wedge(n-m)}$.
Exactly one bar in $f^{\wedge(n-m)}$ has lifespan $\ell_{m+1}$
	while all other bars have lifespan $\ell$.
$\Delta(f^{\wedge(n-m)})$ is known from
	Proposition \ref{proposition:drift}, hence
	we know $f^{\wedge(n-m)}$ completely
	by Proposition \ref{proposition:twolifespans}.
Since
\begin{align*}
	f^{\wedge(n-m)} & =
		x^{\alpha_{m+1} + \ldots + \alpha_n}\pars{y^{\ell_{m+1}} - y^{\ell}} +
		\sum_{1\le i_1 < i_2 < \ldots < i_{n-m}\le n}
		x^{\alpha_{i_1} + \alpha_{i_2} + \ldots + \alpha_{i_{n-m}}}
		y^{\ell} \\
	& = x^{\alpha_{m+1} + \ldots + \alpha_n}\pars{y^{\ell_{m+1}} - y^{\ell}} +
		y^\ell B(f)^{\wedge(m-n)},
\end{align*}
we can solve for $\delta = \alpha_{m+1} + \ldots + \alpha_n$:
\begin{align*}
	x^{\delta} & =
	\frac{1}{y^{\ell_{m+1}} - y^{\ell}}
	\pars{f^{\wedge(n - m)} - y^\ell B(f)^{\wedge(m - n)}} \\
	\delta & = \mu\pars{
		\frac{1}{y^{\ell_{m+1}} - y^{\ell}}
		\pars{f^{\wedge(n - m)} - y^\ell B(f)^{\wedge(m - n)}}
	}.
\end{align*}
Next, consider $f^{\wedge(n-m-1)}$:
\begin{eqnarray*}
	f^{\wedge(n-m-1)} & = &
		x^{\delta - \alpha_{m+1}} \pars{y^{\ell_{m+2}} - y^{\ell}}
		+ \sum_{i=m+2}^{n}
		x^{\delta - \alpha_i}\pars{y^{\ell_{m+1}} - y^{\ell}}
		+ B(f)^{\wedge(n-m-1)}y^{\ell}
\end{eqnarray*}
\begin{equation}
	\therefore \quad C(f^{\wedge(n-m-1)}) =
		x^{\delta - \alpha_{m+1}} \pars{x^{\ell} - x^{\ell_{m+2}}}
		+ \sum_{i=m+2}^{n}
		x^{\delta - \alpha_i}\pars{x^{\ell} - x^{\ell_{m+1}}}
		+ B(f)^{\wedge(n-m-1)}\pars{1 - x^{\ell}}.
	\label{eq:n-m-1barcode}
\end{equation}
Let $g = x^{\delta-\alpha_{m+1}}y^{\ell_{m+2}-\ell}
	+ \sum_{i=m+2}^n x^{\delta-\alpha_i}y^{\ell_{m+1}-\ell}$.
$g$ is simply the result from shortening all bars in $f^{\wedge(n-m-1)}$
	by $\ell$
	and removing those with zero remaining lifespan.
We see that
\begin{eqnarray}
	\Delta(g) & = &
		\sum_{i=m+1}^n \pars{\delta - \alpha_i}
		\ = \ (n-m-1)\delta, \label{eq:deltaofg} \\
	C(g) & = &
		x^{\delta - \alpha_{m+1}}\pars{1 - x^{\ell_{m+2} - \ell}}
		+ \sum_{i=m+2}^n x^{\delta-\alpha_i}\pars{1 - x^{\ell_{m+1} - \ell}}
		\nonumber\\
	& = & x^{-\ell}\pars{
		C(f^{\wedge(n-m-1)}) - B(f)^{\wedge(n-m-1)}\pars{1 - x^\ell}}
		 \label{eq:criticalofg}
\end{eqnarray}
where the last equation comes from \eqref{eq:n-m-1barcode}.
Using Proposition \ref{proposition:twolifespans},
	we can determine $g$ from known information, then compute
\[
	B(g) = x^{\delta}\sum_{i=m+1}^n x^{-\alpha_i} \quad\text{ and }\quad
	\bar B = B(f) - x^{\delta}\pars{B(g)|_{1/x}} = \sum_{i=1}^m x^{\alpha_i},
\]
where $B(g)|_{1/x}$ is the result from substituting $1/x$ into $x$ in $B(g)$.

Let $\tilde f = \sum_{i=m+1}^n x^{\alpha_i}y^{\ell_i}$.
$f$ can be expressed simply as
\begin{equation}
	f = \bar B y^{\ell} + \tilde f.
	\label{eq:induction}
\end{equation}
It follows that for any $p \in \brcs{1, 2, \ldots, n}$,
\begin{align}
	f^{\wedge p} & =
		\tilde f^{\wedge p}
		+ y^{\ell}\sum_{i=1}^p \bar B^{\wedge i} B(f)^{\wedge(p-i)}
		\nonumber\\
	B(f^{\wedge p}) & =
		B(\tilde f^{\wedge p}) +
		\sum_{i=1}^p \bar B^{\wedge i} B(f)^{\wedge(p-i)}
		\label{eq:subbirthseries}\\
	D(f^{\wedge p}) & =
		D(\tilde f^{\wedge p}) +
		x^{\ell}\sum_{i=1}^p \bar B^{\wedge i} B(f)^{\wedge(p-i)}.
		\label{eq:subdeathseries}
\end{align}
Subtract \eqref{eq:subbirthseries} from \eqref{eq:subdeathseries}
	and rearrange to get
\[
	C(\tilde f^{\wedge p}) =
	C(f^{\wedge p}) - \pars{1 - x^\ell}
		\sum_{i=1}^p \bar B^{\wedge i}B(f)^{\wedge(p-i)}.
\]
All terms on the right-hand side of this equation are known,
	so $C^\wedge(\tilde f)$ is known.
By definition, $\tilde f$
	has one fewer distinct lifespan than $f$,
	so $\tilde f$ can be constructed by the induction hypothesis.
$f$ can then be obtained from \eqref{eq:induction}.
This completes the proof.

\section{Conclusions and Future Work}
Since we have already established that
	$C^{\wedge}(\varepsilon(M)) = \chi_{\mcO}^\wedge(M)$
	in Proposition \ref{proposition:finitelypresentedmatching},
	Theorem \ref{theorem:main} translates into the language of persistence
	modules as
\begin{corollary} \label{corollary:main}
	For finitely presented and bounded persistence modules
		in $\PersMod(R, \real_{\ge 0}, \real)$,
		the exterior critical series
		$\chi_{\mcO}(M)$ completely determines the isomorphism class of $M$.
\end{corollary}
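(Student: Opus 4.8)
The plan is to reduce this statement to Theorem~\ref{theorem:main} through the barcode dictionary. Since $M$ is finitely presented and bounded it carries a barcode $\varepsilon(M) \in \natural_0\brks{G \times X_+}$, and $M \mapsto \varepsilon(M)$ is a bijection between isomorphism classes of such modules and barcodes; so it is enough to show that the exterior critical series $\chi_{\mcO}^{\wedge}(M)$ determines $\varepsilon(M)$. For this I would first establish the identity $\chi_{\mcO}^{\wedge}(M) = C^{\wedge}(\varepsilon(M))$ and then invoke the injectivity of $f \mapsto C^{\wedge}(f)$.

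To get the identity, note that for each $p \ge 1$ the exterior power $\Lambda^p M$ is again finitely presented and bounded: by \eqref{eq:exteriorpowerofbarcode} its barcode $\varepsilon(M)^{\wedge p}$ is the finite sum of the $\binom{n}{p}$ bars $x^{\alpha_{i_1}+\cdots+\alpha_{i_p}}y^{\min\{\ell_{i_1},\dots,\ell_{i_p}\}}$, each of strictly positive and finite lifespan. Hence Proposition~\ref{proposition:finitelypresentedmatching} applies to $\Lambda^p M$ and gives $\chi_{\mcO}(\Lambda^p M) = C(\varepsilon(\Lambda^p M)) = C(\varepsilon(M)^{\wedge p})$, the last equality because $\varepsilon$ commutes with exterior powers. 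Multiplying by $z^p$, summing over $p$, and comparing with \eqref{eq:exteriorcriticalseries} and \eqref{eq:exteriorcriticalseriesofbarcode} then yields $\chi_{\mcO}^{\wedge}(M) = C^{\wedge}(\varepsilon(M))$; the degree-zero term causes no trouble, since under the convention $\Lambda^0 M \leftrightarrow 1$ one has $C(1) = B(1) - D(1) = 1 - 1 = 0$, so it carries nothing that depends on $M$.

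Granting this, if two finitely presented bounded modules $M$ and $M'$ satisfy $\chi_{\mcO}^{\wedge}(M) = \chi_{\mcO}^{\wedge}(M')$, then $C^{\wedge}(\varepsilon(M)) = C^{\wedge}(\varepsilon(M'))$, so Theorem~\ref{theorem:main} forces $\varepsilon(M) = \varepsilon(M')$ and therefore $M \cong M'$. The only routine checks are the two above --- stability of the class ``finitely presented and bounded'' under $\Lambda^p$, and the bookkeeping of the $z^0$ term --- and I do not expect either to be difficult. All of the real work lives upstream, in Theorem~\ref{theorem:main}: its proof reconstructs $f = \sum_i x^{\alpha_i}y^{\ell_i}$ from $C^{\wedge}(f)$ by reading off the number of bars $n$, recovering all lifespans $\ell_i$ from the moments $\mu(C(f^{\wedge p}))$ via a triangular linear system, then the drift $\Delta(f)$ and all $\Delta(f^{\wedge p})$, and finally $f$ itself by induction on the number of distinct lifespans, peeling off the shortest-lifespan bars at each stage with the ``two lifespans with one outlier'' reduction --- and I expect that inductive reconstruction to be the main obstacle.
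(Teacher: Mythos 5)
Your proposal is correct and follows the same route as the paper, which derives the corollary in one line from Proposition~\ref{proposition:finitelypresentedmatching} and Theorem~\ref{theorem:main}. You are in fact slightly more careful than the paper: it cites Proposition~\ref{proposition:finitelypresentedmatching} as already giving $C^{\wedge}(\varepsilon(M)) = \chi_{\mcO}^{\wedge}(M)$, whereas that proposition literally only treats $p=1$, and your step of applying it to each $\Lambda^p M$ via $\varepsilon(\Lambda^p M)=\varepsilon(M)^{\wedge p}$ (plus the $z^0$ bookkeeping) is exactly the missing glue.
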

The definition of the critical series, by construction,
	is available in a very general setting.
Corollary \ref{corollary:main} shows that
	it carries enough information
	to be complete in the one-dimensional case.
These properties are not different from the rank invariant
	\cite{TheoryOfMultidimensionalPersistence},
	so we note here that
	there are situations where the rank invariant
	cannot distinguish between persistence modules
	that have different exterior critical series.
(See Figure \ref{figure:invariantexample}.)

\begin{figure}
	\begin{center}
		\includegraphics{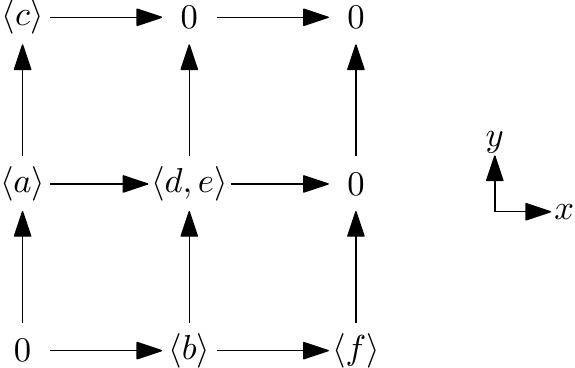}
	\end{center}
	\caption{
		\label{figure:invariantexample}
		An example situation where the exterior critical series
			captures information that is not detected
			by the rank invariant.
		$a, b, c, d, e, f$ are generators
			of a graded vector space over a field $R$.
		We build two persistence modules $M$ and $M'$
			over this graded vector space
			by defining actions of $x$ and $y$ as follows:
		(1) $M$: $ya = c$, $xa = d = yb$ and $xb = f$;
		(2) $M'$: $ya = c$, $xa = d$, $yb = e$ and $xb = f$.
		The rank invariant is the same for both modules
			but the exterior critical series are different, as can be
			seen from the fact that $M$ is generated by at least $3$ elements
			while $M'$ can be generated by $2$ elements.
	}
\end{figure}

We hope that
	Corollary \ref{corollary:main} should extend to
	cover more general cases, such as unbounded tame modules,
	without much difficulty.
Similar results might be available
	for special modules such as zigzag persistence modules
	\cite{ZigzagPersistence}
	embedded as 2-dimensional persistence modules.
The general construction of the $\mcF$-homology sequence
	may allow more information to be detected; for example,
	one may define reverse-onset functors that capture intervals
	of the form $(g, g']$.
Also, incorporating tensor powers and symmetric powers may give information
	that is unavailable with exterior powers alone
	in the case of multi-dimensional persistence.

There is still much to study
	about the computational aspect of this concept.
In its raw form,
	the exterior critical series
	requires $O(2^n)$ storage for a
	persistence module with $n$ generators
	(with persistence dimension $1$), so
	it is not computationally practical.
We do hope, however, that there will be a procedure
	to choose and compute only parts of the exterior critical series
	that provide sufficient information about major characteristics
	of the module.

\bibliographystyle{plain}
\bibliography{persistencehomology}

\end{document}